\theoremstyle{plain}
\newtheorem{theorem}{Theorem}[section]
\newtheorem{lemma}[theorem]{Lemma}
\theoremstyle{remark}
\newtheorem{remark}[theorem]{Remark}
\numberwithin{equation}{section}
\title[Deviation top eigenvalue tridiagonal matrices]{Deviation of top eigenvalue for some tridiagonal matrices under various moment assumptions}
\author{Yi HAN}
\address{Department of Pure Mathematics and Mathematical Statistics, University of Cambridge.
}
\email{yh482@cam.ac.uk}
\thanks{Supported by EPSRC grant EP/W524141/1.}
\begin{document}

\begin{abstract}
 Symmetric tridiagonal matrices appear ubiquitously in mathematical physics, serving as the matrix representation of discrete random Schrödinger operators. In this work we investigate the top eigenvalue of these matrices in the large deviation regime, assuming the random potentials are on the diagonal with a certain decaying factor $N^{-\alpha}$, and the probability law $\mu$ of the potentials satisfy specific decay assumptions. We investigate two different models, one of which has random matrix behavior at the spectral edge but the other does not. Both the light-tailed regime, i.e. when $\mu$ has all moments, and the heavy-tailed regime are covered. Precise right tail estimates and a crude left tail estimate are derived. In particular we show that when the tail $\mu$ has a certain decay rate, then the top eigenvalue is distributed as the Fréchet law composed with some deterministic functions. The proof relies on computing one point perturbations of fixed tridiagonal matrices.
\end{abstract}

\maketitle

\section{Introduction}
 The theory of random Schrödinger operators gained much success in describing various phenomena in mathematical physics. A 1-d random Schrödinger operator in its finite volume version can be described as the following operator defined on $\{0,1,\cdots,N\}$
\begin{equation}
    H_N\varphi(k)=\varphi(k-1)+\varphi(k+1)+\mathfrak{a}(k)\varphi(k),\quad \varphi(0)=\varphi(N)=0,
\end{equation}
where $\mathfrak{a}(k),k=1,\cdots,N$, are certain random potentials. It corresponds to the matrix representation 
\begin{equation}
    H_N=H_N^\infty+\Lambda_N^0,
\end{equation}

where 
\begin{equation}\label{1.41.4}
H_N^\infty=\begin{pmatrix}
0&1&0&\cdots&0\\
1&0&1&\ddots&0\\
\vdots&\ddots&\ddots&\ddots&\vdots\\
0&\cdots&1&0&1\\
0&\cdots&0&1&0
    \end{pmatrix},\end{equation}
and \begin{equation}
    \Lambda_N^0=\operatorname{diag}\left(\mathfrak{a}(1),\cdots,\mathfrak{a}(N)\right).
\end{equation}

Under some slight assumptions on the potentials $\mathfrak{a}(\cdot)$, \cite{minami1996local} proves that the bulk eigenvalues of $H_N$ converge to a Poisson process after an appropriate re-scaling. Further, assuming that $\mathfrak{a}$ has a doubly exponential type upper tail distribution, it is known \cite{biskup2016eigenvalue} that $\lambda_1(N)$, the top eigenvalue of $H_N$, converges to a Gumbel distribution of max-order class.

To investigate the transition from localized phase to de-localized phase, a variant of $H_N$ was introduced in \cite{kiselev1998modified} (for a modified model) and \cite{WOS:000308042700006} where the potentials are vanishing with $N$, i.e.
\begin{equation}\label{decayingdigaonal}
H_N^\alpha=H_N^\infty+\Lambda_N^\alpha,\quad     \Lambda_N^\alpha=\operatorname{diag}(N^{-\alpha}\mathfrak{a}(1),\cdots, N^{-\alpha}\mathfrak{a}(N)).
\end{equation}
The transition from localized to de-localized phase has been verified the bulk statistics at the critical value $\alpha=\frac{1}{2}$, see for example \cite{nakano2017fluctuation}.

In this work we investigate the asymptotics of the top eigenvalue of $H_N^\alpha$ with decaying diagonal potentials \eqref{decayingdigaonal}. When the potentials $\mathfrak{a}$ are not too heavy-tailed, it is easy to check that the top eigenvalue of $H_N^\alpha$ converges to 2 almost surely, and when $\mathfrak{a}$ has heavier tails, the top eigenvalue can be arbitrarily large with positive probability. We identify the large deviation profile of the top eigenvalue in the former light-tailed case, then identify the threshold of criticality, and finally derive the law of the top eigenvector in the critical case. When $\mathfrak{a}(\cdot)$ has even heavier tails, we verify that the top eigenvalue has a Poisson distribution.

This work is partially inspired by the recent progress in large deviation of Wigner matrices. When the matrix has sub-Gaussian entries with a bound on its Laplace transform, the large deviation profile of the largest eigenvalue was derived via spherical integral in \cite{WOS:000542157900013}, where it was the \textit{collective} behavior of random potentials, instead of the behavior of one outlier potential, that accounts the large deviation profile. When the random variables have a tail heavier than Gaussian, the large deviation of spectral statistics was derived in \cite{articledeviationcapto} (and see \cite{10.1214/16-EJP4146} for deviation of the largest eigenvalue in this regime) where the deviation is accounted for by few outlier potentials taking very large values. For heavy-tailed Wigner matrices, the distribution of the top eigenvalue was also well studied by many authors. When matrix entries have finite fourth moment, the top eigenvalue sticks to the edge of semicircle law \cite{bai1988necessary}, and the rescaled top eigenvalue has a Tracy-Widom fluctuation at the edge\cite{Lee2012ANA}. When the matrix entries do not have a finite fourth moment, it is verified in \cite{Soshnikov2004} and \cite{article} that the largest eigenvalue has a Poisson distribution. Finally, when the density of the matrix entries have a tail decay $x^{-4}dx$, the distribution of the top eigenvalue is explicitly determined in \cite{diaconu2023more}. For the random Schrödinger operator $H_N^\alpha$, we will show that the large deviation profile, and the asymptotic distribution of the top eigenvalue is always governed by outlier potentials of $\mathfrak{a}$ taking unusually large values.

We also study a model of close relevance: consider the matrix 
\begin{equation}\label{1.2001}
    G_N^\infty= 
    \begin{pmatrix}
   0&\sqrt{(N-1)/N}&0&&\\
   \sqrt{(N-1)/N} & 0 &  \sqrt{(N-2)/N}\\
    &\ddots&\ddots&&\\
        &&0&\sqrt{2/N}&0\\
    &&\sqrt{2/N}&0&\sqrt{1/N}\\
    &&0&\sqrt{1/N}&0
    \end{pmatrix}.
\end{equation} 
This model is closely related to the matrix model of beta ensembles, introduced by Dumitriu and Edelman in \cite{dumitriu2002matrix}. Indeed, $G_N^\infty$ can be thought of the zero temperature model in this beta-parameterized matrix family \eqref{fucky}. We are also interested in random operators of the following form
\begin{equation}
   G_N^\infty+\Lambda_N^\alpha,
\quad \Lambda_N^\alpha=\operatorname{diag}(N^{-\alpha}\mathfrak{a}(1),\cdots, N^{-\alpha}\mathfrak{a}(N)),\end{equation}
but in order to foster better analytic treatment, we will instead work with an orthogonal invariant version
\begin{equation}\label{matrixformofG}
G_N^\alpha\overset{\text{def}}=G_N^\infty+U_N\Lambda_N^\alpha U_N^T,\end{equation}
where $U_N$ is a Haar distributed orthogonal matrix of size $N$, independent of the random variables $\mathfrak{a}(\cdot)$, and $U_N^T$ denotes the transpose of $U_N$. We will verify that the large deviation profiles of the top eigenvalue of $H_N^\infty$ and $G_N^\infty$ are very similar, except that the precise expression of large deviation rate function is different.

\subsection{Statement of main results}

The main results of this paper are outlined as follows:

\begin{theorem}\label{1sttheorem}
    Consider the random operator $H_N^\alpha$ defined in \eqref{decayingdigaonal}, and denote by $\lambda_1(N)$ the largest eigenvalue of $H_N^\alpha$. Assume that the random potentials $\mathfrak{a}(1),\cdots,\mathfrak{a}(N)$ are i.i.d. with distribution $\mathfrak{a}$. Then 
    \begin{enumerate}
        \item  (Weibull distribution) Assume that for some $\beta>0$, some $C>0$, $0<C_1<C_2<\infty$ we have for any $t>1$,
        \begin{equation}\label{eq1,81}
         C_1\exp(-Ct^\beta)\leq \mathbb{P}(\mathfrak{a}>t)\leq   C_2\exp(-Ct^\beta),
        \end{equation}
         \begin{equation}\label{eq1.91}
         \mathbb{P}(\mathfrak{a}<-t)\leq   C_2\exp(-Ct^\beta),
        \end{equation}
        
        then for any $\lambda>2$,  we have the \textbf{upper tail} estimate \begin{equation}
            \lim_{N\to\infty}\frac{\log(\mathbb{P}(\lambda_1(N)>\lambda))}{N^{\alpha\beta}}=-C\left(\sqrt{\lambda^2-4}\right)^\beta,
        \end{equation}
and for any $\lambda<2$, we have the \textbf{lower tail} estimate: there exists some constants $C_3,C_4>0$, and when $\beta<2$ there exists some sufficiently small $c>0$ such that
            \begin{equation}\label{lefttailguud}
   \mathbb{P}(\lambda_1(N)<\lambda)\leq\begin{cases}
C_3e^{-C_4N^{\alpha\beta+1}},\quad \beta\in [2,\infty),\\
C_3e^{-C_4N^{\alpha\beta+c}},\quad \beta\in(0,2).
   \end{cases}
        \end{equation}
 The constant $C_4>0$ depends on $\lambda$, and can be written as an explicit function of $\lambda$. See Remark \ref{explicitlefttail} for a discussion.
        
        \item (Sub-critical)
        Assume that for some $\beta>\frac{1}{\alpha}$, $C>0,C_1>0$ we have for all $t>1$,
        \begin{equation}\label{eq1.121}
            \mathbb{P}(\mathfrak{a}>t)=  Ct^{-\beta},
        \end{equation} 
         \begin{equation}\label{eq1.131}
            \mathbb{P}(\mathfrak{a}<-t)\leq   C_1t^{-\beta},
        \end{equation}
        then for any $\lambda>2$, we have the \textbf{upper tail} estimate:
        \begin{equation}
    \lim_{N\to\infty} N^{\alpha\beta-1}\mathbb{P}(\lambda_1(N)>\lambda)=\left(\sqrt{\lambda^2-4}\right)^{-\beta},
        \end{equation}
        and for any $\lambda<2$ we have the \textbf{lower tail} estimate: when $\beta>2$, for any $\lambda<2$ and any $\epsilon>0$ we can find $C_2=C_2(\epsilon)>0$ depending on $\epsilon$ and $\lambda$ such that
         \begin{equation}\label{1.151}
   \mathbb{P}(\lambda_1(N)<\lambda)\leq (C_2N)^{-(\alpha(\beta-\epsilon)+1)}
        \end{equation}
        and when $\beta\in(0,2]$, for any $\lambda<2$ we can find sufficiently small $c>0$ and some $C_3=C_3(\beta,c,\lambda)$ such that 
        \begin{equation}\label{1.161}
   \mathbb{P}(\lambda_1(N)<\lambda)\leq (C_3N)^{-c}.
        \end{equation}
        \item (Critical regime) Assume there exists some $C>0,C_1>0$ such that for any $t>1$,
        \begin{equation}\label{1.171}
            \mathbb{P}(\mathfrak{a}>t)=Ct^{-\frac{1}{\alpha}}, 
        \end{equation} 
         \begin{equation}\label{1.181}
            \mathbb{P}(\mathfrak{a}<-t)\leq   C_1t^{-\frac{1}{\alpha}},
        \end{equation}
        then almost surely
        \begin{equation}
            \lambda_1(N)\overset{law}\to \sqrt{\xi^2+4}, \quad N\to\infty,
        \end{equation}
where $\xi$ is a non-negative-valued random variable that satisfies, for any $\lambda>0$,
\begin{equation}
    \mathbb{P}(\xi>\lambda)=1-e^{-C\lambda^{-\frac{1}{\alpha}}}.
\end{equation}

        \item (Randomness dominating)  
         Assume that for some $0<\beta<\frac{1}{\alpha}$, $C>0,C_1>0$ we have for all $t>1$,
          \begin{equation}\label{1.211}
            \mathbb{P}(\mathfrak{a}>t)=   Ct^{-\beta},
        \end{equation}  
         \begin{equation}\label{1.221}
            \mathbb{P}(\mathfrak{a}<-t)\leq   C_1t^{-\beta},\end{equation}
        then 
        \begin{equation}\label{1.231}
         N^{\alpha-\frac{1}{\beta}} \lambda_1(N)\overset{law}\to \xi, \quad N\to\infty,
        \end{equation} where $\xi$ is a non-negative valued random variable satisfying for any $\lambda>0$,
        \begin{equation}
    \mathbb{P}(\xi>\lambda)=1-e^{-C\lambda^{-\beta}}.
\end{equation}
    \end{enumerate} 
\end{theorem}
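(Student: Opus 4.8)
My plan is to reduce all four parts to one computation: the effect on $H_N^\infty$ of a single large diagonal entry. Write $v_k:=N^{-\alpha}\mathfrak a(k)$ and let $e_k$ be the $k$-th coordinate vector. For $\lambda\notin\operatorname{spec}(H_N^\infty)$,
\begin{equation*}
\det\bigl(\lambda-H_N^\infty-v_ke_ke_k^T\bigr)=\det(\lambda-H_N^\infty)\bigl(1-v_k\,G_{kk}(\lambda)\bigr),\qquad G_{kk}(\lambda):=\bigl[(\lambda-H_N^\infty)^{-1}\bigr]_{kk},
\end{equation*}
and for $\lambda>2$ one has $G_{kk}(\lambda)\to(\lambda^2-4)^{-1/2}$ uniformly over bulk indices, with $G_{kk}(\lambda)\le(\lambda^2-4)^{-1/2}$ always and a strictly smaller value toward the two ends (where the critical bump height is $\tfrac12(\lambda+\sqrt{\lambda^2-4})>\sqrt{\lambda^2-4}$). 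So a bulk bump $v_k$ produces an eigenvalue $\sqrt{v_k^2+4}+o(1)$, which exceeds $\lambda$ exactly when $v_k>\sqrt{\lambda^2-4}$, and the matching trial vector is $\psi(n)\propto e^{-\gamma|n-k|}$ with $2\cosh\gamma=\sqrt{v_k^2+4}$. For upper bounds I will peel off the largest entry by Weyl's inequality, $\lambda_1(H_N^\infty+\Lambda_N^\alpha)\le\lambda_1\bigl(H_N^\infty+v_{k_*}e_{k_*}e_{k_*}^T\bigr)+\bigl\|\Lambda_N^\alpha-v_{k_*}e_{k_*}e_{k_*}^T\bigr\|$, and use the crude sandwich $N^{-\alpha}\max_n\mathfrak a(n)\le\lambda_1(N)\le 2+N^{-\alpha}\max_n\mathfrak a(n)_+$ from Rayleigh quotients; for lower bounds I test against the localized vector above. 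A Combes--Thomas / resolvent-decay estimate shows the top eigenvector concentrates near $\arg\max_n\mathfrak a(n)$, so that the $O(1)$ other large potentials, typically at distance $\Theta(N)$, shift $\lambda_1(N)$ by only exponentially small amounts; negative potentials only decrease $\lambda_1(N)$ and may be discarded in all upper bounds.

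Given this, parts (2)--(4) reduce to the extreme value theory of the i.i.d.\ variables $\mathfrak a(1),\dots,\mathfrak a(N)$: since $N^{-1/\beta}\max_n\mathfrak a(n)$ converges in law to a Fréchet variable of index $\beta$ with tail $1-e^{-Cx^{-\beta}}$, we have $N^{-\alpha}\max_n\mathfrak a(n)\sim N^{1/\beta-\alpha}\times(\text{that Fréchet variable})$. For $\beta<1/\alpha$ this diverges, the crude sandwich is already sharp to order $o(N^{-\alpha}\max_n\mathfrak a(n))$, and \eqref{1.231} follows. For $\beta=1/\alpha$ the bump is $O(1)$, $\gamma$ is $O(1)$, and eigenvector localization upgrades the sandwich to $\lambda_1(N)=\sqrt{(N^{-\alpha}\max_n\mathfrak a(n))^2+4}+o(1)$, giving $\lambda_1(N)\overset{\mathrm{law}}{\to}\sqrt{\xi^2+4}$. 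For $\beta>1/\alpha$ the bump is $o(1)$ so $\lambda_1(N)\to2$, and the right tail comes from inclusion--exclusion over the $\sim N$ bulk sites, $\mathbb P(\lambda_1(N)>\lambda)=\mathbb P\bigl(\exists k:\ N^{-\alpha}\mathfrak a(k)>\sqrt{\lambda^2-4}\bigr)(1+o(1))$, once one checks that two simultaneous outliers, and clustered moderate potentials, contribute at strictly lower order (a run of $m$ adjacent sites each $\ge N^\alpha\eta$ costs $\lesssim N(CN^{-\alpha\beta}\eta^{-\beta})^m=o(N^{1-\alpha\beta})$). The subcritical left-tail bounds \eqref{1.151} and \eqref{1.161} are instances of the window argument below.

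For the Weibull right tail in part (1), the lower bound is the single bulk bump: condition on $\mathfrak a(k_0)>N^\alpha(\sqrt{\lambda^2-4}+\varepsilon)$ for one of the $\sim N$ bulk indices and on $\max_j|\mathfrak a(j)|\le(\log N)^{2/\beta}$ (overwhelmingly likely under the assumed tail bounds), use the localized trial vector to force $\lambda_1(N)>\lambda$, and let $\varepsilon\downarrow0$, giving $\liminf N^{-\alpha\beta}\log\mathbb P(\lambda_1(N)>\lambda)\ge-C(\sqrt{\lambda^2-4})^\beta$. The upper bound is the crux. Splitting on $M^*:=\max_n\mathfrak a(n)$, the event $\{M^*>N^\alpha(\sqrt{\lambda^2-4}-\delta)\}$ is handled by a union bound giving rate $-C(\sqrt{\lambda^2-4}-\delta)^\beta$, and I must show the complementary ``capped'' event $\{\lambda_1(N)>\lambda,\ M^*\le N^\alpha(\sqrt{\lambda^2-4}-\delta)\}$ is negligible on the $N^{\alpha\beta}$ scale. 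Since the Weibull rate of $\{\mathfrak a(n)\approx N^\alpha d_n\}$ is $\sim CN^{\alpha\beta}(d_n)_+^\beta$, this comes down to a deterministic optimization lemma: for diagonal potentials $d=(d_n)$,
\begin{equation*}
\inf\Bigl\{\textstyle\sum_n (d_n)_+^\beta\ :\ d_n\le\sqrt{\lambda^2-4}-\delta\ \ \forall n,\ \ \lambda_1\bigl(H_N^\infty+\operatorname{diag}(d)\bigr)\ge\lambda\Bigr\}\ \ge\ (\lambda^2-4)^{\beta/2}+\kappa(\delta),
\end{equation*}
with $\kappa(\delta)>0$ and $\kappa(\delta)\downarrow0$ as $\delta\downarrow0$; a polynomial-entropy union bound over the labels and discretized heights of the few ``active'' sites (and over their number) then bounds the capped event by $N^{O_\delta(1)}\exp\!\bigl(-CN^{\alpha\beta}\bigl[(\lambda^2-4)^{\beta/2}+\kappa(\delta)\bigr]\bigr)$, and letting $\delta\downarrow0$ closes the gap. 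Proving this lemma is where one-point perturbations of fixed tridiagonal matrices enter: with $p_k(\lambda)$ the leading principal minors and $r_k:=p_k/p_{k-1}$ satisfying $r_k=(\lambda-d_k)-1/r_{k-1}$, one has $\lambda_1(N)>\lambda$ iff some $r_k<0$; the free map $r\mapsto\lambda-1/r$ has attracting fixed point $r_*=\tfrac12(\lambda+\sqrt{\lambda^2-4})$ and repelling fixed point $1/r_*$, a bump of height exactly $\sqrt{\lambda^2-4}$ carries $r_*$ to $1/r_*$, and forcing a sign change with every bump strictly below $\sqrt{\lambda^2-4}$ requires a longer excursion and hence strictly larger total cost. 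I expect this lemma, together with the continuity of the infimum as $\delta\to0$, to be the main obstacle.

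Finally, for the left tail in part (1) I use disjoint windows. Fix $L=L(\lambda)$ with $2\cos\tfrac{\pi}{L+1}>\lambda$ and, for each of the $\lfloor N/L\rfloor$ disjoint length-$L$ blocks $W_i\subseteq\{1,\dots,N\}$, let $\psi_i$ be the zero-extended top Dirichlet eigenvector of $H_N^\infty$ restricted to $W_i$, so that
\begin{equation*}
\lambda_1(N)\ \ge\ \bigl\langle\psi_i,(H_N^\infty+\Lambda_N^\alpha)\psi_i\bigr\rangle\ =\ 2\cos\tfrac{\pi}{L+1}+N^{-\alpha}\sum_{n\in W_i}\mathfrak a(n)\psi_i(n)^2 .
\end{equation*}
Hence $\{\lambda_1(N)<\lambda\}$ forces $\sum_{n\in W_i}\mathfrak a(n)\psi_i(n)^2<-N^\alpha\rho$ with $\rho:=2\cos\tfrac{\pi}{L+1}-\lambda>0$ in \emph{every} block; these events are independent, and each has probability at most $LC_2\exp(-c_1N^{\alpha\beta})$ for an explicit $c_1=c_1(\lambda,\beta,C)$ (it forces some $\mathfrak a(n)\le-cN^\alpha$ in the block). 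Multiplying over the $\lfloor N/L\rfloor$ blocks and absorbing the resulting $e^{O(N)}$ factor yields $\mathbb P(\lambda_1(N)<\lambda)\le C_3e^{-C_4N^{\alpha\beta+1}}$ with $C_4\sim c_1/L$, giving \eqref{lefttailguud} for $\beta\ge2$ and the explicit constant discussed in Remark~\ref{explicitlefttail}; the $\beta\in(0,2)$ case is recorded in the weaker form $e^{-C_4N^{\alpha\beta+c}}$.
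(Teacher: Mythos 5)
Your framework — a rank-one perturbation identity $\det(\lambda-H_N^\infty-v_ke_ke_k^T)=\det(\lambda-H_N^\infty)(1-v_kG_{kk}(\lambda))$ with bulk asymptotics $G_{kk}(\lambda)\to(\lambda^2-4)^{-1/2}$, and extreme-value theory for the $\mathfrak a(i)$ — is the same engine the paper uses, and your reductions of parts (2), (3), (4) and of the Weibull lower bound to this engine are sound. The two places where your plan diverges are the Weibull upper-tail upper bound and the left tail, and they fare very differently.

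\emph{The Weibull upper-tail upper bound is a genuine gap.} You reduce the ``capped'' event $\{\lambda_1(N)>\lambda,\ \max_n\mathfrak a(n)\le N^\alpha(\sqrt{\lambda^2-4}-\delta)\}$ to a deterministic variational claim
\begin{equation*}
\inf\Bigl\{\textstyle\sum_n(d_n)_+^\beta:\ d_n\le\sqrt{\lambda^2-4}-\delta,\ \lambda_1(H_N^\infty+\operatorname{diag}(d))\ge\lambda\Bigr\}\ \ge\ (\lambda^2-4)^{\beta/2}+\kappa(\delta),\quad \kappa(\delta)>0,
\end{equation*}
and you yourself flag this as the main obstacle; it is not proved, only motivated by the transfer recursion $r_k=(\lambda-d_k)-1/r_{k-1}$. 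The quantitative content you need — that every way of forcing a sign change with sub-critical bumps costs strictly more than one critical bump, with a modulus $\kappa(\delta)$ that is uniform over arbitrary numbers and placements of active sites and does not degenerate as $N\to\infty$ — is exactly where the argument must be done carefully, and you also need to control the entropy of discretized configurations (positions, heights, and the \emph{number} of active sites) compatibly with $\kappa(\delta)$. The paper takes a different and shorter route that avoids this optimization entirely: it uses the closed form for the resolvent entries $R_{ij}$ of $(\lambda I-H_N^\infty)^{-1}$ (from Hu's explicit inverse), Gaussian-eliminates the matrix $I_N-R\Lambda_N^*$ so that its determinant is a product of diagonal factors $1-\lambda_i^{(N)}R_{ii}$ up to an $O(e^{-N^c})$ error, and then shows that the ``bad'' configurations (a cluster of several moderately large potentials close together, or too many moderate potentials) have probability negligible on the $\exp(-cN^{\alpha\beta})$ scale, by a union bound. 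In other words, where you would argue the bad configurations are \emph{expensive}, the paper argues they are \emph{rare} given the structural assumptions of Lemma~\ref{lemma2.1}, which is easier to verify. As written, your Weibull upper bound is incomplete.

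\emph{Your left-tail argument is correct, genuinely different, and in fact stronger than what you claim.} The paper bounds $\mathbb P(\lambda_1(N)<\lambda)$ by passing through the $W_2$-distance of empirical spectral measures, the Hoffman–Wielandt inequality, and then Markov/Chernoff bounds on $\sum_i|\mathfrak a(i)|^2$, which is what produces the case split at $\beta=2$ and the weaker exponent $N^{\alpha\beta+c}$ for $\beta<2$. Your disjoint-window scheme sidesteps this entirely: fixing $L$ with $2\cos\frac{\pi}{L+1}>\lambda$, the event $\{\lambda_1(N)<\lambda\}$ forces, in every block, the weighted average of the $\mathfrak a(n)$ against $\psi_i^2$ to drop below $-N^\alpha\rho$, hence some $\mathfrak a(n)<-N^\alpha\rho$ in that block; the blocks are independent, so $\mathbb P(\lambda_1(N)<\lambda)\le\bigl(LC_2e^{-C\rho^\beta N^{\alpha\beta}}\bigr)^{\lfloor N/L\rfloor}$, which after absorbing the $e^{O(N)}$ prefactor gives $e^{-C_4N^{\alpha\beta+1}}$ for \emph{every} $\beta>0$. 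There is no reason to weaken this to $e^{-C_4N^{\alpha\beta+c}}$ when $\beta<2$; you are matching the paper's stated bound rather than what your argument actually yields. The same window argument transposed to case (2) replaces the square-summability issue that forces the paper's $\epsilon$-loss in \eqref{1.151}–\eqref{1.161}. This is a clean improvement and worth isolating as a lemma if you write this up.

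Two minor cautions. The Weyl-peeling bound $\lambda_1\le\lambda_1(H_N^\infty+v_{k_*}e_{k_*}e_{k_*}^T)+\|\Lambda_N^\alpha-v_{k_*}e_{k_*}e_{k_*}^T\|$ is not useful on the rare events you condition on, since the operator-norm remainder need not be small there; it is fine only for the typical-event heuristics. And in case (2) the claim that only adjacent runs matter needs a version of the sparsity union bound (as in the paper's verification that Lemma~\ref{lemma2.1}'s hypotheses hold with probability $1-o(N^{1-\alpha\beta})$); your one-line bound on runs does not by itself exclude many spatially separated moderate potentials, though that case is straightforward to rule out with the same union-bound technique the paper uses.
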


\begin{remark}
    The upper tail estimates in cases (1) and (2) are sharp, yet the lower tail estimates derived in cases (1) and (2) are crude and may not be optimal. A related example is the lower tail of KPZ equation with narrow wedge initial condition \cite{tsai2022exact}, where the exact lower tail estimate was derived via computing the exponential moment of the Airy-2 process, which admits an interpretation via the stochastic Airy operator. The stochastic Airy operator is closely related to the theme of this paper, yet it is defined via Brownian motion and thus a lot of analytical techniques (like the Girsanov transform) can be used. In our paper, we consider different moment assumptions on the random variables, so that the analytical techniques in the context of stochastic Airy operator are not available to us for deriving a sharper lower tail estimate.
\end{remark}
There has also been some recent investigations on the matrix model \eqref{decayingdigaonal}, where a scaling limit of top eigenvalue (when $\alpha=\frac{3}{2}$) was derived in \cite{han2023universal}.

    We can also derive the limiting distribution of the second, third, fourth, etc. largest eigenvalues of $H_N^\alpha$, denoted by $\lambda_i(N),i\geq 2$. All the derivations depend on the distribution of the largest value of $\mathfrak{a}(1),\cdots,\mathfrak{a}(N)$ and the finite rank perturbation formula in Lemma \ref{lemma2.1}, so that the distributions of the second, third, fourth, etc. eigenvalues can be written out explicitly. 
    
    In cases (1) and (2), we can check that the deviation probabilities of  $\lambda_i(N),i\geq 2$ are negligible compared to $e^{-N^{\alpha\beta}}$ and $N^{1-\alpha\beta}$ respectively, and is thus negligible compared to deviation of $\lambda_1(N)$. In cases (3) and (4) however, deviations of $\lambda_i(N),i\geq 2$ have the same magnitude as that of $\lambda_1(N)$, and we have the following point process characterization:

    \begin{theorem}\label{theorem1.1ladd} In case (3) of Theorem \ref{1sttheorem}, for any $\epsilon>0$, the largest eigenvalues of $H_N^\alpha$ in $(2+\epsilon,\infty)$ converge to a random point process which is the image under the map $x\mapsto\sqrt{x^2+4}$ of the Poisson process on $[0,\infty)$ with intensity  $
     \frac{C}{\alpha x^{\frac{1}{\alpha}+1}},\quad x>0.$

    In case (4) of Theorem \ref{2ndtheorem}, the point process $$N^{\alpha-\frac{1}{\beta}}\lambda_1(N)\geq N^{\alpha-\frac{1}{\beta}}\lambda_2(N)\geq\cdots, $$ where $\lambda_1(N)\geq\lambda_2(N)\geq \cdots$ denote the eigenvalue of $H_N^\alpha$ in decreasing order, converges to the Poisson point process on $[0,\infty)$ with intensity 
    $
     \frac{C\beta}{ x^{\beta+1}},\quad x>0.
    $
        
    \end{theorem}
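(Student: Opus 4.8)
The plan is to show that in both regimes the top of the spectrum is, up to a vanishing perturbation, a deterministic pushforward of the point process formed by the large entries among $\mathfrak a(1),\dots,\mathfrak a(N)$, to which classical extreme value theory applies. Write $\Lambda_N^\alpha=\operatorname{diag}(N^{-\alpha}\mathfrak a(1),\dots,N^{-\alpha}\mathfrak a(N))$ and recall the classical fact that if $\mathbb P(\mathfrak a>t)=Ct^{-\gamma}$ and $\mathbb P(\mathfrak a<-t)\le C_1t^{-\gamma}$ for $t>1$, then $\sum_{i=1}^N\delta_{N^{-1/\gamma}\mathfrak a(i)}$ converges vaguely on $(0,\infty)$ to a Poisson process of intensity $C\gamma x^{-\gamma-1}\,dx$, the negative atoms escaping to $-\infty$ on this scale. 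In case (3) one has $\gamma=1/\alpha$ and the natural scale of the potentials is exactly $N^{-\alpha}$, giving intensity $\tfrac{C}{\alpha}x^{-1/\alpha-1}$; in case (4) one has $\gamma=\beta$ and the scale is $N^{\alpha-1/\beta}\cdot N^{-\alpha}$, giving intensity $C\beta x^{-\beta-1}$. These match the statement, so the task is to transfer the limit from the potentials to the eigenvalues.

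Case (4) needs only Weyl's inequality. After multiplying by $N^{\alpha-1/\beta}$ one has $N^{\alpha-1/\beta}H_N^\alpha=\operatorname{diag}(N^{-1/\beta}\mathfrak a(i))+N^{\alpha-1/\beta}H_N^\infty$, and since $\beta<1/\alpha$ and $\|H_N^\infty\|\le2$ uniformly in $N$, the second summand has operator norm at most $2N^{\alpha-1/\beta}\to0$. By Weyl's inequality the $j$-th largest eigenvalue of the left side differs from the $j$-th largest diagonal entry $N^{-1/\beta}\mathfrak a_{(j)}$ by at most $2N^{\alpha-1/\beta}$, uniformly in $j$, and a uniform vanishing shift does not affect a vague limit; hence the point process $\{N^{\alpha-1/\beta}\lambda_i(N)\}_i$ converges to the Poisson process of intensity $C\beta x^{-\beta-1}\,dx$ on $(0,\infty)$. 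The same two lines apply verbatim to $G_N^\alpha$, since $U_N\Lambda_N^\alpha U_N^T$ has the same eigenvalues as $\Lambda_N^\alpha$ and $\|G_N^\infty\|$ is bounded uniformly in $N$.

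Case (3) is where Lemma \ref{lemma2.1} is used, because there $H_N^\infty$ is not small. Fix $\epsilon>0$ and split $\Lambda_N^\alpha=L_N+S_N$ where $L_N$ retains the entries of modulus $>\epsilon$ and $S_N$ the rest, so $\|S_N\|\le\epsilon$. The number of retained entries is asymptotically Poisson-distributed, hence tight, and with probability tending to $1$ the retained sites are pairwise at graph distance $\to\infty$ and bounded away from the endpoints $1,N$, since the locations of the largest order statistics of an i.i.d.\ sample converge, after rescaling by $1/N$, to independent uniform points. By Weyl's inequality it suffices to locate, up to $\pm\epsilon$, the eigenvalues of $H_N^\infty+L_N$ above $2+\delta$ for a suitable $\delta=\delta(\epsilon)\downarrow0$. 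For $z$ bounded away from $[-2,2]$ the resolvent $(H_N^\infty-z)^{-1}$ has entries decaying geometrically in $|i-j|$ uniformly in $N$, with bulk diagonal entries converging to $g(z):=-(z^2-4)^{-1/2}$; inserting this into the rank-$k$ secular equation of Lemma \ref{lemma2.1} restricted to the retained sites yields a $k\times k$ determinantal condition which to leading order is diagonal with $j$-th factor $1-a_jg(z)$, one factor per retained value $a_j$. A retained negative $a_j$ gives a factor that cannot vanish for $z>2$ (there $g(z)<0$), so negative potentials produce no outlier above $2$; a retained positive $a_j$ gives the root $z=\sqrt{a_j^2+4}$, and a continuity argument pins the corresponding eigenvalue at $\sqrt{a_j^2+4}+o(1)$ while ruling out any others above $2+\delta$. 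Therefore the eigenvalue point process of $H_N^\alpha$ in $(2+\delta,\infty)$ converges to the pushforward under $x\mapsto\sqrt{x^2+4}$ of the Poisson process of large potentials, restricted to $(\sqrt{(2+\delta)^2-4},\infty)$; letting $\epsilon\downarrow0$, so that $\delta\downarrow0$ and $\sqrt{(2+\delta)^2-4}\downarrow0$, upgrades this to vague convergence on $(2,\infty)$, giving the claimed pushforward with intensity $\tfrac{C}{\alpha}x^{-1/\alpha-1}$.

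The main obstacle is the quantitative decoupling in case (3): one must rule out spurious outliers above $2+\delta$ coming from the cumulative effect of the moderately large potentials (those just below $\epsilon N^\alpha$) and from the boundary sites, and one must make the $o(1)$ errors uniform over the random configuration $(a_1,\dots,a_k)$ so the $N\to\infty$ and $\epsilon\downarrow0$ limits can be interchanged. Both points are already handled, in the $k=1$ case, in the proof that $\lambda_1(N)\to\sqrt{\xi^2+4}$ in case (3) of Theorem \ref{1sttheorem}; the new ingredients are the Poissonization of the top order statistics and the observation that well-separated large potentials act through asymptotically orthogonal, exponentially localized resolvent profiles, so that their contributions to the secular equation decouple.
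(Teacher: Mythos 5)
Your argument is correct, and the two cases deserve separate comment. For case (3) you follow essentially the paper's route: truncate $\Lambda_N^\alpha$ to the entries exceeding $\epsilon$ (Weyl absorbing the remainder), observe that with probability $1-o(1)$ the retained sites are few, well separated, and away from the boundary, and then read off the outliers from the finite-rank secular equation of Lemma~\ref{lemma2.1}~(3) via the correspondence $a_j\mapsto\sqrt{a_j^2+4}$; this is the same decoupling the paper performs. For case (4), however, you take a genuinely more elementary route. The paper exhibits, for each $j$, an approximate eigenvector $f_{i(j)}$ supported at the site of the $j$-th order statistic to produce an eigenvalue near $N^{-1/\beta}\mathfrak a_{(j)}$, and then deletes the $k$ extreme rows/columns and appeals to Cauchy interlacing to certify that these are the top $k$ eigenvalues. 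Your single application of Weyl's inequality, using $\|N^{\alpha-1/\beta}H_N^\infty\|\le 2N^{\alpha-1/\beta}\to 0$, gives at once the uniform bound $|N^{\alpha-1/\beta}\lambda_j(N)-N^{-1/\beta}\mathfrak a_{(j)}|\le 2N^{\alpha-1/\beta}$ for every $j$, which transfers the Poisson limit in one line and replaces both the existence step and the interlacing step. This is cleaner, and it also covers $G_N^\alpha$ with no extra work since conjugation by $U_N$ preserves the spectrum of the diagonal part.

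One small sign slip in case (3): you set $g(z)=-(z^2-4)^{-1/2}$ and write the secular factor as $1-a_jg(z)$, which would place the root at negative $a_j$, contradicting your (correct) conclusion that negative potentials give no outlier above $2$ while positive $a_j$ give $z=\sqrt{a_j^2+4}$. The paper works with $R_{ii}=((\lambda I_N-H_N^\infty)^{-1})_{ii}\to +(\lambda^2-4)^{-1/2}$ and factor $1-a_jR_{ii}$; either drop the minus sign in your $g$ or change the factor to $1+a_jg(z)$.
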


\begin{remark} The Poisson distribution claimed in (4) has been verified for several other random matrix models with heavy tail distribution, see \cite{Soshnikov2004}. Meanwhile, in all these claims, the assumption that the tail estimate for $\mathfrak{a}$ holds for all $t>1$ can be easily changed to holding for all $t$ larger than a fixed number.
\end{remark}

In the following we summarize the results on top eigenvalues concerning the random operator \eqref{matrixformofG}. We first introduce a convenient notation: for any $\lambda\geq 2$, denote by $f(\lambda)$ the solution to $$f(\lambda)+\frac{1}{f(\lambda)}=\lambda$$ that satisfies $f(\lambda)\geq 1$.

\begin{theorem}\label{2ndtheorem}
    Consider the random operator $G_N^\alpha$ defined in \eqref{matrixformofG}, where $U_N$ is a Haar distributed unitary matrix independent of the random variables $\mathfrak{a}(\cdot)$. We denote by $\lambda_1(N)$ the largest eigenvalue of $G_N^\alpha$. Assume the random potentials $\mathfrak{a}(1),\cdots,\mathfrak{a}(N)$ are i.i.d. with distribution a real-valued random variable $\mathfrak{a}$. Then 
    \begin{enumerate}
        \item  (Weibull distribution) Assume that the random variable $\mathfrak{a}(\cdot)$ satisfies Assumptions \eqref{eq1,81}, \eqref{eq1.91}. Then for any $\lambda>2$,  we have the upper tail estimate \begin{equation}\label{limitsss}
            \lim_{N\to\infty}\frac{\log(\mathbb{P}(\lambda_1(N)>\lambda))}{N^{\alpha\beta}}=-C\left(f(\lambda)\right)^\beta,
        \end{equation}
and for any $\lambda<2$ we have the lower tail estimate as in \eqref{lefttailguud}.
        
        \item (Sub-critical)
        Assume that the random variables satisfy \eqref{eq1.121} and \eqref{eq1.131}. Then
        for any $\lambda>2$, we have the upper tail estimate:
        \begin{equation}\label{agagagagagagrqag}
    \lim_{N\to\infty} N^{\alpha\beta-1}\mathbb{P}(\lambda_1(N)>\lambda)=\left(f(\lambda)\right)^{-\beta},
        \end{equation}
        and for any $\lambda<2$ we have the lower tail estimates in exactly the same form as in \eqref{1.151}, \eqref{1.161} (though the constants $C_2$, $C_3$ there may be slightly different).

        \item (Critical regime) Assume that the random variables satisfy \eqref{1.171}, \eqref{1.181} Then 
        \begin{equation}
            \lambda_1(N)\overset{law}=\begin{cases} 2,&\xi\leq 1,\\\xi+\frac{1}{\xi},&\xi>2,
            \end{cases}
        \end{equation}
where $\xi$ is a non-negative-valued random variable that satisfies, for any $\lambda>0$,
\begin{equation}
    \mathbb{P}(\xi>\lambda)=1-e^{-C\lambda^{-\frac{1}{\alpha}}}.
\end{equation}
More generally, for any $d\in\mathbb{N}_+$, the largest $d$ eigenvalues of $G_N^\alpha$ in $(2,
\infty)$ converge to a random point process which is the image under the map $$x\mapsto \begin{cases}2&x<1\\x+\frac{1}{x}&x\geq 1\end{cases}$$ of the largest $d$ points in the Poisson process on $[0,\infty)$ with intensity  $
     \frac{C}{\alpha x^{\frac{1}{\alpha}+1}},\quad x>0.$

        \item (Randomness dominating)  
         Assume that the random variables satisfy \eqref{1.211}, \eqref{1.221}. Then the top eigenvalue $\lambda_1(N)$ satisfies \eqref{1.231}. Meanwhile, the point process $$N^{\alpha-\frac{1}{\beta}}\lambda_1(N)\geq N^{\alpha-\frac{1}{\beta}}\lambda_2(N)\geq\cdots $$ converges to the Poisson point process on $[0,\infty)$ with intensity 
    $
     \frac{C\beta}{ x^{\beta+1}},\quad x>0,
    $ where $\lambda_1(N)\geq \lambda_2(N),\cdots\geq $ denote the eigenvalues of $G_N^\alpha$ in decreasing order.
        \end{enumerate}
\end{theorem}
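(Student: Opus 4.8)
The plan is to treat the four regimes by a single mechanism, exactly as for $H_N^\alpha$ in Theorem~\ref{1sttheorem}: the top eigenvalue of $G_N^\alpha$ is produced by the largest — and in cases (3)--(4) the finitely many largest — among the potentials $\mathfrak{a}(1),\dots,\mathfrak{a}(N)$, each acting as a spike on $G_N^\infty$. The simplification over $H_N^\alpha$ is that, $U_N$ being Haar, the eigenvalues of $G_N^\alpha$ coincide with those of $U_N^{T}G_N^\infty U_N+\Lambda_N^\alpha$, a uniformly random rotation of the deterministic $G_N^\infty$ perturbed by the diagonal $\Lambda_N^\alpha$, so that one never has to track which coordinate carries the large potential. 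Two facts about $G_N^\infty$ are used and are standard: its empirical spectral distribution converges to the semicircle law $\rho_{\mathrm{sc}}$ on $[-2,2]$ (it is the zero-temperature member of the Dumitriu--Edelman tridiagonal $\beta$-ensemble), and its largest eigenvalue tends to $2$. Hence, for fixed $\lambda>2$ and any deterministic unit $e$, the quadratic form $e^{T}(\lambda-U_N^{T}G_N^\infty U_N)^{-1}e=(U_Ne)^{T}(\lambda-G_N^\infty)^{-1}(U_Ne)$ concentrates, with fluctuations $O(N^{-1/2})$ with overwhelming probability, around $\tfrac{1}{N}\operatorname{tr}(\lambda-G_N^\infty)^{-1}\to G_{\mathrm{sc}}(\lambda)$, the Stieltjes transform of $\rho_{\mathrm{sc}}$; and $f(\lambda)$ together with $G_{\mathrm{sc}}(\lambda)=1/f(\lambda)$ are precisely the two roots of $t^{2}-\lambda t+1=0$. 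This identity is the only place the $G$-model differs from $H_N^\alpha$: there the relevant object is the local density of states of the discrete Laplacian, whose Stieltjes transform $1/\sqrt{\lambda^{2}-4}$ diverges at the edge, whereas $1/f(\lambda)$ stays below $1$ on $(2,\infty)$ — and that boundedness is exactly what produces the Baik--Ben Arous--P\'ech\'e threshold (spike strength $1$) appearing in parts (3)--(4) and absent for $H_N^\alpha$.

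\emph{Upper tail, cases (1)--(2).} Condition on the potentials and write $\Lambda_N^\alpha=N^{-\alpha}(\max_i\mathfrak{a}(i))\,ee^{T}+\widetilde\Lambda$, $e$ the arg-max coordinate vector. Under \eqref{eq1,81}--\eqref{eq1.91} or \eqref{eq1.121}--\eqref{eq1.131} one has $\mathbb{P}(\mathfrak{a}<-\delta N^\alpha)\le e^{-cN^{\alpha\beta}}$, so with overwhelming probability the positive part of $\widetilde\Lambda$ has operator norm $o(1)$ and its negative part is $\ge-\delta$; since negative spikes only lower the top eigenvalue and a routine estimate rules out an outlier produced collectively by the bulk, $\lambda_1(G_N^\alpha)=\lambda_1\!\big(U_N^{T}G_N^\infty U_N+N^{-\alpha}(\max_i\mathfrak{a}(i))\,ee^{T}\big)+o(1)$ on that event. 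The rank-one case of Lemma~\ref{lemma2.1}, with secular equation $\theta\,(U_Ne)^{T}(\lambda-G_N^\infty)^{-1}(U_Ne)=1$, together with the concentration above, gives that this spiked matrix has an eigenvalue $>\lambda$ iff $N^{-\alpha}\max_i\mathfrak{a}(i)>f(\lambda)\bigl(1+O(N^{-1/2})\bigr)$. Then classical extreme-value asymptotics finish the job: \eqref{eq1,81} gives $\log\mathbb{P}(\max_i\mathfrak{a}(i)>N^\alpha f(\lambda))=-CN^{\alpha\beta}f(\lambda)^{\beta}+o(N^{\alpha\beta})$, hence the limit in case (1); \eqref{eq1.121} gives $N\,\mathbb{P}(\mathfrak{a}>N^\alpha f(\lambda))\to0$ and $\mathbb{P}(\max_i\mathfrak{a}(i)>N^\alpha f(\lambda))\sim CN^{1-\alpha\beta}f(\lambda)^{-\beta}$, hence \eqref{agagagagagagrqag}. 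Continuity and strict monotonicity of $f$ on $[2,\infty)$ absorb the $O(N^{-1/2})$ error into a genuinely lower-order correction; the technically most delicate point here is to show that the "bad" events discarded above carry probability super-polynomially small, so as not to spoil the sharp constant in case (2).

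\emph{Point-process regimes, cases (3)--(4).} Under \eqref{1.171}, respectively \eqref{1.211}, the point process $\{N^{-\alpha}\mathfrak{a}(i)\}_{i}$ restricted to $[\epsilon,\infty)$, respectively the rescaled $\{N^{-1/\beta}\mathfrak{a}(i)\}_i$, converges to the Poisson point process of intensity $\tfrac{C}{\alpha}x^{-1/\alpha-1}$, respectively $C\beta x^{-\beta-1}$. Fix $d$, condition on the $d$ largest potentials, and write the perturbation as the rank-$d$ spike $\sum_{j\le d}\theta_j(U_Ne_{i_j})(U_Ne_{i_j})^{T}$ plus a remainder whose positive part has operator norm $\le\epsilon$ (once $d$ exceeds the number of limiting atoms above $\epsilon$) and whose negative part only lowers eigenvalues. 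In the limit the spike directions $U_Ne_{i_1},\dots,U_Ne_{i_d}$ are orthonormal and $(U_Ne_{i_j})^{T}(\lambda-G_N^\infty)^{-1}(U_Ne_{i_k})\to\delta_{jk}/f(\lambda)$, so the secular determinant of Lemma~\ref{lemma2.1} factorizes as $\prod_{j\le d}(1-\theta_j/f(\lambda))$: each spike of strength $\theta>1$ contributes an eigenvalue $\to f^{-1}(\theta)=\theta+1/\theta$, while spikes of strength $\le1$ and the remainder leave that part of the spectrum at $2+o(1)$ — the finite-rank BBP picture for the semicircle. Letting $d\to\infty$ then $\epsilon\to0$ yields the point-process limits in (3) and (4); in (4) every surviving spike has strength $\to\infty$, so $f^{-1}(\theta)=\theta+O(1/\theta)$ and after rescaling the eigenvalues are $\theta+o(1)$, which is \eqref{1.231} and the stated Poisson limit. (The $H_N^\alpha$ analogues in Theorem~\ref{theorem1.1ladd} follow identically with $f^{-1}$ replaced by $x\mapsto\sqrt{x^{2}+4}$, which has no threshold.)

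\emph{Lower tail, and the main obstacle.} For $\lambda<2$: dropping the positive part of $U_N\Lambda_N^\alpha U_N^{T}$ and applying Weyl's inequalities gives $\lambda_1(G_N^\alpha)\ge\lambda_{r+1}(G_N^\infty)-\lambda_{r+1}(\Lambda_N^{\alpha,-})$ for every $r$, where $\Lambda_N^{\alpha,-}=N^{-\alpha}\operatorname{diag}((-\mathfrak{a}(i))^{+})$; taking $r=\lfloor\delta N\rfloor$ with $\delta$ small (depending on $\lambda$) makes $\lambda_{r+1}(G_N^\infty)\to2-\Theta(\delta^{2/3})$, so $\lambda_1(G_N^\alpha)<\lambda$ forces $\lambda_{r+1}(\Lambda_N^{\alpha,-})>c_\lambda$, i.e.\ more than $\delta N$ of the potentials lie below $-c_\lambda N^\alpha$. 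Under \eqref{eq1,81} this binomial event has probability $\le e^{-C_4N^{\alpha\beta+1}}$ via $\mathbb{P}(\mathfrak{a}<-c_\lambda N^\alpha)\le C_2e^{-Cc_\lambda^{\beta}N^{\alpha\beta}}$, giving \eqref{lefttailguud} for $\beta\ge2$; under \eqref{eq1.131} the same scheme yields the polynomial bounds \eqref{1.151}, \eqref{1.161}; for $\beta<2$ only a weaker tail input is available in this step and the exponent degrades to $N^{\alpha\beta+c}$, and $C_4$ can be tracked as an explicit function of $\lambda$ through $c_\lambda$ and $\delta$ (cf.\ Remark~\ref{explicitlefttail}). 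The genuine difficulty left open is that this lower tail is crude: a sharp version would seem to need the stochastic-Airy-operator analysis alluded to in the remark after Theorem~\ref{1sttheorem}, which is not available under these moment assumptions. Except for the lower tail the argument runs parallel to that of Theorem~\ref{1sttheorem}, the only structural change being the replacement of $\sqrt{\lambda^{2}-4}$ by $f(\lambda)$ forced by $G_{\mathrm{sc}}(\lambda)=1/f(\lambda)$.
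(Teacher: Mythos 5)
Your proposal reaches the correct conclusions and its overall mechanism — a large potential acting as a spike, with $f(\lambda)$ replacing $\sqrt{\lambda^2-4}$ because the relevant resolvent quantity for $G_N^\infty$ is the semicircle Stieltjes transform $G_{\mathrm{sc}}(\lambda)=1/f(\lambda)$ rather than the diverging $1/\sqrt{\lambda^2-4}$ — is the right one, and your identification of the BBP threshold at spike strength $1$ as the structural novelty of the $G$-model is exactly what is operating here. But you deviate from the paper's route at two points, each of which is worth flagging.

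For the upper tail and the point-process parts, the paper does not re-derive the finite-rank BBP transition by hand. It introduces Lemma~\ref{lemma3.1woc}, which couples $G_N^\infty$ to the Dumitriu--Edelman tridiagonal $G_N^\beta$ and then invokes the Benaych-Georges--Nadakuditi theorem on additive finite-rank, orthogonally-invariant perturbations to obtain the limit $M+1/M$ (for $M\geq 1$) almost surely; the rest of the argument is then verbatim from Theorem~\ref{1sttheorem} with Lemma~\ref{lemma2.1} replaced by Lemma~\ref{lemma3.1woc}. You instead re-prove this input on the fly: the Haar conjugation makes the spike direction $U_Ne$ uniform on the sphere, the quadratic form $(U_Ne)^{T}(\lambda-G_N^\infty)^{-1}(U_Ne)$ concentrates at $\tfrac{1}{N}\operatorname{tr}(\lambda-G_N^\infty)^{-1}\to G_{\mathrm{sc}}(\lambda)$, and the rank-$d$ secular determinant factorizes. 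This is correct and self-contained, at the cost of having to actually carry out the Hanson--Wright-type concentration that the citation bundles away. Either version works; the paper's is shorter, yours is more explicit about where $f(\lambda)$ comes from.

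For the lower tail, the paper reuses the Hoffman--Wielandt / $2$-Wasserstein argument of Theorem~\ref{1sttheorem}, only with the arcsine law replaced by the semicircle (Remark~\ref{explicitlefttail}). Your Weyl-inequality argument — $\lambda_1(G_N^\alpha)\geq\lambda_{r+1}(G_N^\infty)-\sigma_{r+1}$ where $\sigma_{r+1}$ is the $(r+1)$-st largest of $N^{-\alpha}(-\mathfrak{a}(i))^{+}$, so that a small top eigenvalue forces $\gtrsim\delta N$ potentials below $-c_\lambda N^\alpha$ — is genuinely different. Two remarks. First, this scheme requires only that, for every $\epsilon>0$, one can choose $\delta>0$ with $\lambda_{\lfloor\delta N\rfloor+1}(G_N^\infty)\geq 2-\epsilon$ for all large $N$; this follows from weak convergence of the ESD to the semicircle and you should say so, rather than quoting the sharper (and unverified here) $2-\Theta(\delta^{2/3})$ asymptotic, which you do not actually need. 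Second, as you've set it up, $\binom{N}{\delta N}\,\mathbb{P}(\mathfrak{a}<-c_\lambda N^\alpha)^{\delta N}$ already yields $e^{-cN^{\alpha\beta+1}}$ for \emph{every} $\beta>0$ in case (1) and a super-polynomial bound in case (2): the $\beta<2$ degradation to $N^{\alpha\beta+c}$ and the polynomial bounds \eqref{1.151}--\eqref{1.161} you quote are artefacts of the paper's Hoffman--Wielandt route (which passes through $\sum|\mathfrak{a}(i)|^2$ and loses when $\beta<2$), not of yours. So the "weaker tail input" you invoke for $\beta<2$ is not actually forced on you; your argument gives a cleaner and stronger bound, and you should claim it rather than match the paper's weaker one.
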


\begin{remark}
    The matrix model $G_N^\infty$ \eqref{1.2001} is closely related to the Gaussian beta ensemble. The small deviations of top eigenvalue of Gaussian beta ensembles were derived in \cite{10.1214/EJP.v15-798}. Note that the precise large deviation rate function was not derived in \cite{10.1214/EJP.v15-798}. In this paper we derive the precise large deviation rate function, but for a modified model with orthogonal invariance. From a different perspective, there has been a number of recent works concerning deviations of the top eigenvalue of Gaussian beta ensembles at the high temperature limit, including \cite{ WOS:000530548400015} and \cite{pakzad2018poisson}. These works mainly consider the double scaling regime of the Gaussian beta ensemble in which $\beta\to 0$ and $N\beta\to\infty$, and is very different from the models we investigate in this paper where there is only one scaling.
\end{remark}

\subsection{Plan of the paper}\label{section2}
In Section \ref{section2} we prove Theorem \ref{1stest1st} and \ref{theorem1.1ladd}. Then in Section \ref{section3} we prove Theorem \ref{2ndtheorem}.

\section{Proof of Theorem \ref{1sttheorem}}

To prove Theorem \ref{1sttheorem}, we first need the following computation on the top eigenvalue of diagonally perturbed matrices.

\begin{lemma}\label{lemma2.1} Consider a triangular array of real numbers $(\lambda_1^{(N)},\cdots,\lambda_N^{(N)})_{N\geq 1}$ satisfying the following assumptions: there exists some (small) $c\in(0,1)$ such that 
\begin{itemize}
    \item $M_N:=\max\{\lambda_1^{(N)},\cdots,\lambda_N^{(N)}\}\in(0,\infty)$, there exists a unique $i\in [N]$ such that $M_N=\lambda_i^{(N)}$, and there exists some $\epsilon_N>0$ such that for any other $j\neq i$, $\lambda_j^{(N)}\leq M_N-\epsilon_N$. We require that $\epsilon_N$ is bounded away from zero for $N$ large, and that $M_N$ is bounded from infinity for $N$ large.

    \item At any $i\in [1,N]$ such that $|\lambda_i^{(N)}|>N^{-c}$, for any $j\in [1,N]$ where $|i-j|<N^{2c}$ the value $\lambda_j^{(N)}$ must satisfy $|\lambda_j^{(N)}|\leq N^{-c}.$
    \item There are at most $N^{1.5c}$ indices $i\in[1,N]$ such that $|\lambda_i^{(N)}|>N^{-c}.$
\end{itemize}

Then
\begin{enumerate}

\item If $M_N$ is achieved in the middle of $[1,N]$ in the sense that for any $i\in [1,\lfloor N^{c}]\rfloor\cup [N-\lfloor N^{c}\rfloor,N]$, we have $|\lambda_i^{(N)}|\leq N^{-c}$, then
$$ \lambda_1(H_N^\infty+\operatorname{diag}(\lambda_1^{(N)},\cdots,\lambda_N^{(N)}))=
\sqrt{M_N^2+4}(1+o(1))
$$
as $N$ tends to infinity, where $\lambda_1(\cdot)$ denotes the largest eigenvalue of a matrix.
\item In any case, the largest eigenvalue of $H_N^\infty+\operatorname{diag}(\lambda_1^{(N)},\cdots,\lambda_N^{(N)})$ is upper bounded by $\sqrt{M_N^2+4}(1+o(1))$ as $N$ tends to infinity.
\item Let $M_N^1:=M_N,$ $M_N^2,M_N^3,\cdots$ denote the first largest, second largest, third largest,... elements from the array $(\lambda_1^{(N)},\cdots,\lambda_N^{(N)})$. Fix any $d\in\mathbb{N}_+$, assume $M_N^1,\cdots,M_N^d$ are non-negative, uniformly bounded from infinity in $N$, and that $M_N^i-M_N^{i+1}\geq\epsilon_N$ for $i=1,\cdots,d-1$ with $\epsilon_N$ bounded away from zero. Assume moreover that for all $i\in [1,N^c]\cup [N-N^c,N]$ we have $|\lambda_i^{(N)}|\leq N^{-c}$. Then the top $d$ eigenvalues of $H_N^\infty+\operatorname{diag}(\lambda_1^{(N)},\cdots,\lambda_N^{(N)})$ are given by $\sqrt{(M_N^i)^2+4}(1+o(1))$ for $i=1,\cdots,d$.
\end{enumerate}
\end{lemma}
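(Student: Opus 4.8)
We write $A=H_N^\infty+\operatorname{diag}(\lambda_1^{(N)},\dots,\lambda_N^{(N)})$ and denote by $\lambda_1(A)\geq\lambda_2(A)\geq\cdots$ its eigenvalues. The proof rests on one exact computation: on $\mathbb Z$, the operator $u(j)\mapsto u(j-1)+u(j+1)+M\,\mathbf 1_{\{j=0\}}u(j)$ with $M>0$ has exactly one eigenvalue outside $[-2,2]$, namely $\sqrt{M^2+4}$, with the geometrically decaying eigenfunction $u(j)=r^{|j|}$, where $r=\tfrac12(\sqrt{M^2+4}-M)\in(0,1)$ is the positive root of $r^2+Mr-1=0$; if instead the spike sits at the endpoint of a Dirichlet half-line the outlier is $M+M^{-1}$, which in the relevant range is $\leq\sqrt{M^2+4}$. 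We pass from this model to the finite matrices using the standard uniform hyperbolicity of transfer matrices: for a recurrence $u_{s+1}=(\lambda-p_s)u_s-u_{s-1}$ with $\lambda-p_s\geq 2+\eta$ throughout and all $p_s=O(N^{-c})$, writing $x_\pm(\lambda')$ for the roots of $x+x^{-1}=\lambda'$, the product $T_{L}\cdots T_{1}$ of transfer matrices is hyperbolic with expansion factor $\geq x_+(\lambda)^{L}$, so any solution that stays bounded over the block $\{0,\dots,L\}$ is forced to align with the stable direction $\binom{x_-(\lambda)}{1}$, whence the one-step ratio equals $x_-(\lambda)+o(1)$; and a solution pinned to vanish one step beyond the block (the $\sinh$-type solution $\propto x_+^{\,s}-x_-^{\,s}$) has one-step ratio $\leq x_-(\lambda)+o(1)$. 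Finally we may assume $\lambda_1(A)\geq 2+\delta$ for a fixed $\delta>0$: otherwise $\lambda_1(A)\to 2$ by eigenvalue interlacing against the at most $N^{1.5c}$ large coordinates, and since $\sqrt{M_N^2+4}\geq 2$ all three claims are immediate; in particular $M_N$ is then bounded away from $0$, so all the $o(1)$ errors below are uniform.

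\emph{Upper bound.} This yields part (2), hence also the upper halves of (1) and (3). Let $v$ be a unit top eigenvector, normalized so that $\|v\|_\infty=v_k=1$ at some index $k$. The eigenvalue equation at $k$ gives $\lambda_k^{(N)}=\lambda_1(A)-(v_{k-1}+v_{k+1})\geq\lambda_1(A)-2>N^{-c}$, so $k$ is one of the large indices; by the second hypothesis the window $\{j:|j-k|<N^{2c}\}$ contains no other large index and has all potentials $\leq N^{-c}$ off $k$. At least one of the two arms of $v$ out of $k$ runs $\geq N^{2c}$ clean steps before meeting the matrix boundary; there the hyperbolicity fact forces the corresponding ratio $v_{k\pm1}/v_k$ to equal $x_-(\lambda_1(A))+o(1)$, while on a short arm reaching the Dirichlet boundary the pinned solution gives ratio $\leq x_-(\lambda_1(A))+o(1)$. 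Substituting both ratios into the equation at $k$ gives $\lambda_k^{(N)}\geq\lambda_1(A)-2x_-(\lambda_1(A))-o(1)=\sqrt{\lambda_1(A)^2-4}-o(1)$; since $\lambda_k^{(N)}\leq M_N$ and $M_N$ is bounded, this is exactly $\lambda_1(A)\leq\sqrt{M_N^2+4}(1+o(1))$. The small potentials and the finite truncation of the window are absorbed into $o(1)$, and the far-away large spikes never enter, lying outside the window about $k$.

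\emph{Lower bound in (1) and (3).} Let $i_1,\dots,i_d$ attain $M_N^1,\dots,M_N^d$ (with $d=1$ in case (1)); by the boundary hypothesis each $i_m$ is $\geq N^c$ from both ends, and being large it is $\geq N^{2c}$ from every other large index. Take the truncated model eigenfunctions $\psi^{(m)}_j=r_m^{|j-i_m|}\mathbf 1_{\{|j-i_m|\leq\lfloor N^c/2\rfloor\}}$ with $r_m=\tfrac12(\sqrt{(M_N^m)^2+4}-M_N^m)$; evaluating the Rayleigh quotient and using $r_m^2+M_N^m r_m-1=0$ gives $\langle\psi^{(m)},A\psi^{(m)}\rangle/\|\psi^{(m)}\|^2=\sqrt{(M_N^m)^2+4}+O(N^{-c}+r_m^{2\lfloor N^c/2\rfloor})=\sqrt{(M_N^m)^2+4}-o(1)$, the $O(N^{-c})$ coming from the small potentials inside the support. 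The $\psi^{(m)}$ have pairwise disjoint supports, hence are orthogonal and mutually non-interacting under $H_N^\infty$; applying the Courant--Fischer principle to $\operatorname{span}\{\psi^{(1)},\dots,\psi^{(m)}\}$ yields $\lambda_m(A)\geq\sqrt{(M_N^m)^2+4}-o(1)$ for $m=1,\dots,d$. Combined with the upper bound this closes part (1) and the lower half of part (3).

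\emph{Upper bound in (3), and the main difficulty.} For $m\leq d$, apply Courant--Fischer with the codimension-$(m-1)$ subspace $W=\{v:v_{i_1}=\dots=v_{i_{m-1}}=0\}$. For $v\in W$, deleting the top $m-1$ potentials leaves $M_N^m$ as the largest remaining one, so $\sum_j\lambda_j^{(N)}v_j^2\leq M_N^m\sum_{j\ \mathrm{large},\,j\notin\{i_1,\dots,i_{m-1}\}}v_j^2+N^{-c}$, whence $\langle v,Av\rangle\leq\lambda_1(H_N^\infty+M_N^m\Pi)+N^{-c}$ with $\Pi$ the coordinate projection onto the large indices other than $i_1,\dots,i_{m-1}$. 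The matrix $H_N^\infty+M_N^m\Pi$ is again a free chain with well-separated spikes of height $\leq M_N^m$, so the already-proven upper bound --- which uses neither uniqueness of the maximal value nor any spectral gap --- bounds its top eigenvalue by $\sqrt{(M_N^m)^2+4}+o(1)$; thus $\lambda_m(A)\leq\sqrt{(M_N^m)^2+4}+o(1)$, and part (3) follows. The crux of the whole argument is the upper bound: one must make quantitative the statement that a bounded solution of a uniformly hyperbolic recurrence over a block of length $N^{2c}\to\infty$ is, to leading order, the contracting mode, controlling simultaneously the $O(N^{-c})$ drift of the coefficients, the finite block length, and --- the genuinely delicate case --- an eigenvector whose peak lies within $N^{2c}$ of the Dirichlet boundary, where the relevant one-sided solution must be shown to have one-step ratio $\leq x_-(\lambda_1(A))+o(1)$ rather than $=$. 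Everything else is bookkeeping with Weyl's inequality, Courant--Fischer, and the explicit model computation.
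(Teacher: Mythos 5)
The proposal is correct but takes a genuinely different route from the paper. The paper's proof reduces the eigenvalue condition for $\lambda>2$ to the characteristic equation $\det\bigl(I-R(\lambda)\Lambda_N^*\bigr)=0$, where $R(\lambda)=(\lambda I-H_N^\infty)^{-1}$ has the explicit closed-form entries of Hu (1996); it then shows, via a Gaussian-elimination argument that exploits the separation of the large sites and the exponential off-diagonal decay of $R$, that this determinant is, up to an $O(e^{-N^c})$ error, the diagonal product $\prod_i\bigl(1-\lambda_i^{(N)}R_{ii}\bigr)$, whose sign changes determine the outlier eigenvalues $\sqrt{(M_N^m)^2+4}(1+o(1))$. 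You instead avoid the resolvent entirely: for the upper bound you evaluate the eigenvalue equation at the $\ell^\infty$-maximizing coordinate and use uniform hyperbolicity of the transfer cocycle over the clean window to pin the one-step ratios $v_{k\pm1}/v_k$ to $x_-(\lambda)+o(1)$, which gives $\lambda_k^{(N)}\geq\sqrt{\lambda_1(A)^2-4}-o(1)$; for the lower bound you plug truncated geometric trial functions with disjoint supports into Courant--Fischer; and for the upper half of (3) you use codimension-$(m-1)$ test subspaces. The trade-offs: the paper's computation is concrete and yields the characteristic polynomial directly, but leans on the special resolvent formula for the constant-coefficient Jacobi matrix and needs the Gaussian-elimination bookkeeping to decouple spikes; your argument is somewhat more portable (any uniformly hyperbolic background chain would do) and the disjoint-support/codimension machinery for part (3) is cleaner than tracking multiple roots of the product.

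Two points worth tightening if this were to be turned into a full proof. First, the hyperbolicity step is only asserted: one must justify quantitatively that for a solution bounded in magnitude over a clean block of length $L\geq N^{2c}$ with coefficient drift $O(N^{-c})$, the initial ratio equals $x_-(\lambda)+O(N^{-c})+O(e^{-cL})$; in particular the stable direction of the \emph{non-autonomous} product $T_L\cdots T_1$ must be shown close to $\binom{x_-(\lambda)}{1}$, which requires a cone/contraction argument rather than a one-matrix eigenvector computation. Second, your opening reduction to $\lambda_1(A)\geq 2+\delta$ guarantees $M_N^1$ bounded away from zero, but in part (3) the lower-order maxima $M_N^m$, $m\geq 2$, need not be, so the decay rate $r_m=\tfrac12(\sqrt{(M_N^m)^2+4}-M_N^m)$ of the trial function $\psi^{(m)}$ can tend to $1$ and the truncation error $r_m^{2\lfloor N^c/2\rfloor}$ need not vanish. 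This is easily patched: when $M_N^m\leq\delta$ one has $\sqrt{(M_N^m)^2+4}=2+O(\delta^2)$ while interlacing against the at most $N^{1.5c}$ large coordinates already gives $\lambda_m(A)\geq 2-o(1)$, matching the upper bound; but the case split should be made explicit.
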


\begin{proof}

Denote by $\Lambda_N=\operatorname{diag}(\lambda_1^{(N)},\cdots,\lambda_N^{(N)})$ 
and $$\Lambda_N^*=\operatorname{diag}(\lambda_1^{(N)}1_{|\lambda_1^{(N)}|>N^{-c}},\lambda_2^{(N)}1_{|\lambda_2^{(N)}|>N^{-c}},\cdots,\lambda_N^{(N)}1_{|\lambda_N^{(N)}|>N^{-c}}).$$
Then as $N\to\infty$, by Cauchy interlacing theorem, $$|\lambda_1(H_N^\infty+\Lambda_N) -\lambda_1(H_N^\infty+\Lambda_N^*)|\leq N^{-c},$$ so it suffices to compute the top eigenvalue of the latter matrix. Recall that
$\lambda>2$ is an eigenvalue of $H_N^\infty+\Lambda_N^*$ if and only if 
$$\det(\lambda \operatorname{I}_N-H_N^\infty-\Lambda_N^*)=0.$$
Since we assume $\lambda>2$ and that all the eigenvalues of $H_N^\infty$ lie in $(-2,2)$ by a standard calculation, this implies $$\det(\lambda \operatorname{I}_N-H_N^\infty)\neq 0,$$ so that we have
$$
\det(\operatorname{I}_N-(\lambda \operatorname{I}_N-H_N^\infty)^{-1}\Lambda_N^*)=0.
$$
At this point we quote the computations in \cite{hu1996analytical} to compute $R_{ij}:=((\lambda \operatorname{I}_N-H_N^\infty)^{-1})_{ij}$. Denote by $\lambda^*=\operatorname{arccosh}(\frac{1}{2}\lambda)$, then the main result of \cite{hu1996analytical} states that
$$
R_{ij}=\frac{\cosh((N+1-|i-j|)\lambda^*)-\cosh((N+1-i-j)\lambda^*)}{2\sinh(\lambda^*)\sinh((N+1)\lambda^*)}.
$$
In the following we first prove the claims in item (1). We claim the following asymptotics: for any $\lambda_0>2$ we can find a constant $C(\lambda_0,N)>0$ such that, for any $i\in [N^{2c},N-N^{2c}]$, and for any $\lambda>\lambda_0$,
\begin{equation}\label{2.1first12345}|R_{ii}-\frac{1}{2\sinh(\operatorname{arccosh}(\frac{1}{2}\lambda))}|\leq C(\lambda_0,N) \to 0,\quad N\to\infty.\end{equation}
Meanwhile, for any such $i\in [N^{2c},N-N^{2c}]$, we have
\begin{equation}\label{decays1}
|R_{ij}|\leq e^{-N^{2c}}\quad \text{ for any }|j-i|>N^{2c}.
\end{equation} These two estimates can be verified from the fact that for $\lambda^*>1$, $|\cosh((N+1-i-j)\lambda^*)|\sim \frac{1}{2}e^{|N+1-i-j|\lambda_*}$ and $|\sinh((N+1-i-j)\lambda^*)|\sim \frac{1}{2}e^{|N+1-i-j|\lambda_*}$. The same asymptotics hold if we take $N+1$ and $N+1-|i-j|$ in place of $N+1-i-j$. Finally, the assumption $i\in[N^{2c},N-N^{2c}]$ ensures that $|N+1-|i-j||<N+1-N^{2c}$ and $|N+1-i-j|<N+1-N^{2c}$, and these convergence can be quantified with explicit error rates for any $\lambda>\lambda_0>2$. Combining all these asymptotics lead to \eqref{decays1}.
We can similarly verify \eqref{2.1first12345}: in this case we have $\cosh((N+1-i-j)\lambda^*)/\sinh((N+1)\lambda^*)\to 0$ as $N\to\infty$ (as we assume $i\in [N^{2c},N-N^{2c}]$, this term is actually bounded by $e^{-N^c}$) and moreover that $|\tanh((N+1)\lambda^*)- 1|\leq e^{-N^c}$ as $N\to\infty$. Combining all these asymptotics lead to \eqref{2.1first12345}.

Then for any $j\in[1,N]$ such that $|\lambda_j^{(N)}|<N^{-c}$, the $j$-th column of $\operatorname{I}_N-R\Lambda_N^*$ must have entry 1 in the $j$-th coordinate and have entry $0$ in all other coordinates. We then use Gauss elimination to simplify the matrix $\operatorname{I}_N-R\Lambda_N^*$, in the following steps: (i) for any $j_*\in[1,N]$ such that $|\lambda_{j_*}^{(N)}|>N^{-c}$, we can use Gauss elimination so that any $(i,j_*)$-th entry of $\operatorname{I}_N-R\Lambda_N^*$ with $0<|i-j_*|<N^{2c}$ is subtracted to zero, without changing the diagonal entry. This follows from subtracting a suitable multiple of the $i$-th column onto the $j$-th column to set the $(i,j_*)$ entry zero. By our assumption on $\lambda_j^{(N)}$ in the second bullet point in the statement of the lemma, this can be applied for all $|i-j_*|<N^{2c}$. (ii) The same step can be applied to all $(i,j_*)$ entry such that $|\lambda_i^{(N)}|<N^{-c}$, without changing the diagonal entry and determinant of $\operatorname{I}_N-R\Lambda_N^*$. (iii) After these procedures, we are left with a $n\times n$ matrix $T$ with the same determinant and diagonal entries as $\operatorname{I}_N-R\Lambda_N^*$, and with at most $N^{1.5c}\times N^{1.5c}$ nonzero off-diagonal entries $(i,j)$ such that $|T_{ij}|\leq e^{-N^{2c}}$.

We can now easily compute $\det(\operatorname{I}_N-R\Lambda_N^*)$. By the aforementioned Gaussian elimination procedure, this boils down to computing the determinant of a diagonal matrix (with bounded entries) perturbed by a matrix with nonzero entries only on $N^{1.5c}$ rows and columns, and that each entry of the perturbation is bounded by $e^{-N^{2c}}$.  The contribution to the determinant from the perturbation is at most $\sum_{k=1}^N (N^{1.5c})^ke^{-kN^{2c}}=O(e^{-N^c})$, as when we expand the determinant, at each step we have $N^{1.5c}$ possible choices of indices from these perturbed entries, and each perturbed entry is bounded by $e^{-N^{2c}}$.

Therefore we only need to compute the determinant of the diagonal part, and we get 

\begin{equation}\label{detdsdatqqge}
    \det(\operatorname{I}_N-R\Lambda_N^*)=\prod_{i=1}^N(1-\lambda_i^{(N)}R_{ii}1_{|\lambda_i^{(N)}|\geq N^{-c}})+O(e^{-N^{c}}).
\end{equation}

Now we can check that if $\lambda>2$ is such that $M_N(\frac{1}{2\sinh(\operatorname{arccosh}(\frac{1}{2}\lambda))}-C(\lambda_0,N))<1-\omega,$ for any fixed $\omega>0$, then $|R_{ii}\lambda_i^{(N)}|\leq 1-\omega$. This means $\det(\operatorname{I}_N-R\Lambda_N^*)>0$ for such $\lambda$. Since $\omega>0$ is arbitrary and $C(\lambda_0,N)$ tends to $0$, we conclude that for any $\omega>0$, any $\lambda>\sqrt{M_N^2+4}+\omega$ and any $N$ sufficiently large, we must have $\det(\operatorname{I_N}-R\Lambda_N^*)>0$.

Fix any $\omega_0>0$. If $\lambda>2$ is such that $M_N(\frac{1}{2\sinh(\operatorname{arccosh}(\frac{1}{2}\lambda))}-C(\lambda_0,N))>1+\omega_0/4$, but meanwhile, $(M_N-\epsilon_N)(\frac{1}{2\sinh(\operatorname{arccosh}(\frac{1}{2}\lambda))}+C(\lambda_0,N))<1-\omega_0/4$, then for any such $\lambda$ we must have $\det(\operatorname{I}_N-R\Lambda_N^*)<0$ by the assumption in the first bullet point of this lemma. The existence of such $\lambda$ is guaranteed by the fact that $M_N$ is bounded, $\epsilon_N$ is non-vanishing and $C(\lambda_0,N)$ tends to zero as $N$ gets large. Combining both facts, we conclude that the largest $\lambda$ that solves $\det(\lambda I_N-H_N^\infty-\Lambda_N^*)=0$ is $\sqrt{M_N^2+4}(1+o(1))$, so that the largest eigenvalue of $H_N^\infty+\Lambda_N^*$ must be $\sqrt{M_N^2+4}(1+o(1))$ and this concludes the proof of part (1).

For case (2), all the reductions in the previous step work as well, and the only difference is that for $i\in [1,N^c]$ or $[N-N^c,N]$ necessarily we have, as $N\to\infty$,
$$
|R_{ii}|\leq \frac{1}{2\sinh(\operatorname{arccosh}(\frac{1}{2}\lambda))}+C(\lambda_0,N),
$$ where $C(\lambda_0,N)>0$ vanishes to $0$. Adapting the proof in case (1), one sees that we must have  $\lambda_1(H_N^\infty+\Lambda_N^*)\leq\sqrt{M_N^2+4}(1+o(1))$. 

For case (3) where we study the top $d$ eigenvalues, we follow the same steps as in case (1). We again need to compute the determinant $\det(\operatorname{I_N}-R\Lambda_N^*)$ as in \eqref{detdsdatqqge}. Since by assumption $M_N^{i-1}-M_N^i\geq\epsilon_N>0$ is bounded away from zero, we can check as in \eqref{detdsdatqqge} that there is precisely one root to the equation $\det(\operatorname{I_N}-R\Lambda_N^*)$  at $\sqrt{(M_N^i)^2+4}(1+o(1))$ for each $i=1,\cdots,d$, and there are no other roots existing in the interval $(\sqrt{(M_N^d)^2+4}(1-o(1),\infty))$. This completes the proof.
\end{proof}

Now we prove Theorem \ref{1sttheorem}.

We first verify the upper tail claims in case (1), (2) and (3).

\begin{proof}[\proofname\ of Theorem \ref{1sttheorem}, upper tail of case (1)]

Assume that case (1) of Theorem \ref{1sttheorem} holds. Then for any $\lambda>0$,

\begin{equation}\label{1stest1st}\begin{aligned}
\mathbb{P}\left(\max\{N^{-\alpha} \mathfrak{a}(1),\cdots,N^{-\alpha}\mathfrak{a}(N)\}\geq  \lambda\right)&=1-\mathbb{P}(\mathfrak{a}(1)\leq N^{\alpha }\lambda)^N,\\&
=1-(1-\mathbb{P}(\mathfrak{a}(1)>N^\alpha \lambda))^N.
\end{aligned}
\end{equation}
 Noting that $\mathfrak{a}$ has the Weibull distribution, we conclude that 
\begin{equation}
    \frac{\mathbb{P}\left(\max\{N^{-\alpha} \mathfrak{a}(1),\cdots,N^{-\alpha}\mathfrak{a}(N)\}\geq  \lambda\right)}{N\exp(-CN^{\alpha\beta}\lambda^\beta)}\in [C_1,C_2](1+o(1)).
\end{equation}

Then we argue that the maximum in \eqref{1stest1st} is achieved at only one index $i$. Indeed, for any $\lambda>0$,
\begin{equation}
    \mathbb{P}(\text{ there are two indices } i: N^{-\alpha} \mathfrak{a}(i)\geq\lambda)\leq \frac{N^2-N}{2}\exp(-CN^{2\alpha\beta}\lambda^{2\beta})
\end{equation}
which is negligible compared to $\exp(-CN^{\alpha\beta})$. 

We further check the other assumptions of Lemma \ref{lemma2.1} hold with very high probability: for example, the possibility that there are indices $i,j$ such that $|i-j|<N^{2c}$ such that $|N^{-\alpha} \mathfrak{a}(i)|>N^{-c},|N^{-\alpha} \mathfrak{a}(j)|>N^{-c}$ is at most
\begin{equation}
    N\cdot N^{2c}\cdot \exp(-CN^{2(\alpha-c)\beta}),
\end{equation}
which is negligible compared to $\exp(-CN^{\alpha\beta})$ if $c>0$ is chosen small enough.  Meanwhile, the possibility that there are more than $N^{1.5c}$ sites $i$ with $|N^{-\alpha}\mathfrak{a}(i)|>N^{-c}$ is also $o(\exp(-CN^{\alpha\beta}))$ as can be seen from taking a union bound. Also, the possibility that there are two distinct $i$ and $j$ such that $|N^{-\alpha}\mathfrak{a}(i)|>\lambda$, $|N^{-\alpha}\mathfrak{a}(j)|>\lambda$ is negligible compared to the possibility that there is one $i$ with $|N^{-\alpha}\mathfrak{a}(i)|>\lambda$. This justifies that in the following computation we may assume $\epsilon_N$ is bounded away from zero (see the first bullet point in the assumption of Lemma \ref{lemma2.1}).

Now we can apply Lemma \ref{lemma2.1} to complete the proof as follows. For any $\omega>0$, lemma \ref{lemma2.1} implies that when $N$ is sufficiently large, if $\max_i N^{-\alpha}\mathfrak{a}(i)\geq\sqrt{(t+\omega)^2-4}$ and all the other conditions in Lemma \ref{lemma2.1} (1) are satisfied, then we have $\lambda_1(N)>t$.

Our discussions in the previous paragraphs imply that, conditioned on the event that $\max_i N^{-\alpha}\mathfrak{a}(i)>\sqrt{(t+\omega)^2-4}$, then all the other assumptions in the three bullet points of Lemma \ref{lemma2.1} are satisfied with probability $1-o(1)$ and can be made uniform for all $t>t_0>0$. Therefore we compute: for $N$ sufficiently large,

\begin{equation}\begin{aligned}\label{heavier1}
    \mathbb{P}(\lambda_1(N)>t)&\geq \mathbb{P}(\max_i N^{-\alpha}\mathfrak{a}(i)>{\sqrt{(t+\omega)^2-4}})\\&\cdot\mathbb{P} (\text{Assumptions of Lemma \ref{lemma2.1} (1) satisfied}\mid \max_i N^{-\alpha}\mathfrak{a}(i)>\sqrt{(t+\omega)^2-4})
\\&\geq N\exp(-CN^{\alpha\beta}(\sqrt{(t+\omega)^2-4})^\beta)(1+o(1))
    \end{aligned}
\end{equation} 
We also utilized the fact that $$\mathbb{P}(\operatorname{argmax}_i N^{-\alpha}\mathfrak{a}(i)\in [N^c,N-N^c])=1+o(1).$$ This completes the proof of the lower bound.

Now we prove the reverse inequality. Fix $t>2$ and $\omega>0$ sufficiently small. The idea is that the possibility that the assumptions in the three bullet points of Lemma \ref{lemma2.1} do not hold is negligible compared to the probability that $\mathbb{P}(\max_i N^{-\alpha}\mathfrak{a}(i)>{\sqrt{(t-\omega)^2-4}})$, by our previous discussion. Then, assuming these assumptions are justified, Lemma \ref{lemma2.1} (2) implies that if none of the $N^{-\alpha}\mathfrak(a)(i)\geq\sqrt{(t-\omega)^2-4}$, then we must have $\lambda_1(N)<t$. Written more formally, we have deduced that 
for any $\omega>0$, when $N$ is sufficiently large, 
\begin{equation}\label{heavier2}\begin{aligned}
    \mathbb{P}(\lambda_1(N)>t)&\leq \mathbb{P}(\max_i N^{-\alpha}\mathfrak{a}(i)>{\sqrt{(t-\omega)^2-4}})\\&+\mathbb{P}(\text{Assumptions in Lemma \ref{lemma2.1} not all satisfied with }\lambda_i=N^{-\alpha}\mathfrak{a}(i))
    \\&=N\exp(-CN^{\alpha\beta}(\sqrt{(t-\omega)^2-4})^\beta)(1+o(1)).\end{aligned}
\end{equation} This completes the proof of case (1), the upper tail.\end{proof}

\begin{proof}[\proofname\ of Theorem \ref{1sttheorem}, upper tail of case (2)]
Now we consider case (2), where the potentials $\mathfrak{a}(i)$ have heavier tails. We see that 

\begin{equation} \label{case2stars}
    \begin{aligned}
\mathbb{P}\left(\max\{N^{-\alpha} \mathfrak{a}(1),\cdots,N^{-\alpha}\mathfrak{a}(N)\}\geq  \lambda\right)
&=1-(1-\mathbb{P}(\mathfrak{a}(1)>N^\alpha \lambda))^N
\\&=1-(1-CN^{-\alpha\beta}\lambda^{-\beta})^N\\&=CN^{1-\alpha\beta}\lambda^{-\beta} (1+o(1)),
\end{aligned} 
\end{equation} recalling the assumption that $\alpha\beta>1$.
Similar to the previous case, one can check that conditioning on the event that \eqref{case2stars} holds, the assumptions of Lemma \ref{lemma2.1} hold with probability $1+o(1)$. Therefore we have the analogue of estimates \eqref{heavier1}, \eqref{heavier2} in this setting of heavier tails: for any $\omega>0$, when $N$ is sufficiently large, 
\begin{equation}
    \mathbb{P}(\lambda_1(N)>t)\geq CN^{1-\alpha\beta}\sqrt{(t+\omega)^2-4}^{-\beta} (1+o(1)),
\end{equation}
and for any $\omega>0$, when $N$ is sufficiently large, 
\begin{equation}\begin{aligned}
    \mathbb{P}(\lambda_1(N)>t)&\leq CN^{1-\alpha\beta}\sqrt{(t-\omega)^2-4}^{-\beta} (1+o(1))
    \\&+\mathbb{P}(\text{Assumptions in Lemma \ref{lemma2.1} not all satisfied with }\lambda_i=N^{-\alpha}\mathfrak{a}(i))
    \\&\leq CN^{1-\alpha\beta}\sqrt{(t-\omega)^2-4}^{-\beta} (1+o(1)),
\end{aligned}\end{equation} justifying the upper tail in case (2). 
\end{proof}

\begin{proof}[\proofname\ of Theorem \ref{1sttheorem}, upper tail of case (3)]
Now we consider case (3). Under the critical moment condition \eqref{1.171},
\begin{equation} \label{case3stars}
    \begin{aligned}
\mathbb{P}\left(\max\{N^{-\alpha} \mathfrak{a}(1),\cdots,N^{-\alpha}\mathfrak{a}(N)\}\geq  \lambda\right)
&=1-(1-\mathbb{P}(\mathfrak{a}(1)>N^\alpha \lambda))^N
\\&=1-(1-CN^{-1}\lambda^{-\frac{1}{\alpha}})^N\\&\to 1-e^{-C\lambda^{-\frac{1}{\alpha}}},\quad N\to\infty. 
\end{aligned} 
\end{equation}
Moreover, when the claimed event \eqref{case3stars} occurs, it is easy to check that the assumptions in Lemma \ref{lemma2.1} hold with probability $1-o(1)$. Therefore we have the following two-sided inequalities: for any $t>2$ and any $\omega>0$ sufficiently small, then for $N$ sufficiently large,
\begin{equation}\begin{aligned}
  \left(1-e^{-c(\sqrt{(t+\omega)^2-4})^{-\frac{1}{\alpha}}}\right)(1+o(1))\leq    \mathbb{P}(\lambda_1(N)>t)\leq \left( 1-e^{-c(\sqrt{(t-
  \omega)^2-4})^{-\frac{1}{\alpha}}}\right)(1+o(1)).
\end{aligned}\end{equation}Setting $\omega>0$ to be arbitrarily small, the upper tail claim in case (3) now follows.
\end{proof}

\begin{proof}[\proofname\ of Theorem \ref{1sttheorem}, Poisson distribution in case (4)] The argument is similar to \cite{Soshnikov2004}. 

We first compute that for any $\lambda>0$,
\begin{equation} \label{case3starssfdfs}
    \begin{aligned}
\mathbb{P}\left(\max\{N^{-\frac{1}{\beta}} \mathfrak{a}(1),\cdots,N^{-\frac{1}{\beta}}\mathfrak{a}(N)\}\geq  \lambda\right)
&=1-(1-\mathbb{P}(\mathfrak{a}(1)>N^\alpha \lambda))^N
\\&=1-(1-CN^{-1}\lambda^{-\frac{1}{\beta}})^N\\&\to 1-e^{-C\lambda^{-\frac{1}{\beta}}},\quad N\to\infty. 
\end{aligned} 
\end{equation}
That is, 
$$
\mathbb{P}(\max_i N^{-\alpha}\mathfrak{a}(i)\geq N^{\frac{1}{\beta}-\alpha}\lambda)\to 1-e^{-C\lambda^{-\frac{1}{\beta}}},
$$
and one can easily check that the second largest value among $N^{-\frac{1}{\beta}}\mathfrak{a}(i)$ is strictly smaller than the first largest, with probability $1-o(1)$. Assuming that $i\in [n]$ is the site where the maximum is achieved in \eqref{case3starssfdfs}, then we can choose a unit vector such that $f_i=(0,\cdots,0,1,\cdots,0)$, i.e. $f_i$ is 1 on the $i$-th coordinate and is zero everywhere else. Then one easily sees, assuming $\max_i N^{-\frac{1}{\beta}}\mathfrak{a}(i)=\lambda$, that
$$
H_N^\alpha f_i=N^{\frac{1}{\beta}-\alpha}\lambda f_i(1+o(1)),
$$
so by perturbation theorem of Hermitian operators, $N^{\alpha-\frac{1}{\beta}}H_N^\alpha$ has en eigenvalue at $\lambda(1+o(1))$. Meanwhile, observe that for such a choice of $\lambda$, we have $N^\frac{1}{\beta}-\lambda$ is the value of the matrix $H_N^\sigma$ (which is also sparse), so the largest eigenvalue of $N^{\alpha-\frac{1}{\beta}}H_N^\alpha$ cannot exceed $\lambda(1+o(1))$. This completes the proof of part (4).

\end{proof}

\begin{proof}[\proofname\ of Theorem \ref{1sttheorem}, Lower tails in cases (1),(2) and (3).] 

The last step we need to verify is that the possibility for $\lambda_1(N)$ to be smaller than 2 is much smaller than the possibility specified in each case (1),(2),(3), justifying our claims on the left tail. 

For this purpose, note that if $\lambda_1(N)<2-\delta<2$ for some $\delta>0$, then $$d_2(\mu_{H_N^{\alpha}},\mu_{H_N^\infty})>C(\delta)>0$$ for some constant $C(\delta)>0$, where $d_2$ denotes the 2-Wasserstein distance on $\mathbb{R}$ defined for two probability measures $\mu,\nu$ via
$$
d_2(\mu,\nu)=\inf_{\pi\in\mathcal{C}(\mu,\nu)}\left(\int_{\mathbb{R}^2} |x-y|^2d\mu(x,y)\right)^{1/2},
$$(with $\mathcal{C}(\mu,\nu)$ the space of probability measures on $\mathbb{R}^2$ with first marginal $\mu$ and second marginal $\nu$), 
and $\mu_{H_N^{\alpha}}$,$\mu_{H_N^\infty}$ respectively denote the empirical measure of eigenvalues of $H_N^\alpha$ and $H_N^\infty$.

Using the trivial coupling, we have
\begin{equation}\label{trivialcoupling}\begin{aligned}
d_2(\mu_{H_N^{\alpha}},\mu_{H_N^\infty})&\leq (\frac{1}{N}\sum_{i=1}^N |\lambda_i^\alpha-\lambda_i^\infty|^2)^{1/2}
\\&\leq \frac{1}{\sqrt{N}}(\operatorname{Tr}(H_N^\infty-H_N^\alpha)^2)^{\frac{1}{2}}
\\&=\frac{1}{N^{\alpha+\frac{1}{2}}}\left(\sum_{i=1}^N |\mathfrak{a}(i)|^2\right)^{\frac{1}{2}}.
\end{aligned}\end{equation}
where the last step follows from Hoffman-Wielandt inequality, see for example \cite{anderson2010introduction}, Lemma 2.1.19 and also \cite{dolai2021ids}. 

We begin with case (1) of Theorem \ref{1stest1st}, where $\mathfrak{a}(\cdot)$ is assumed to have a Weibull distribution.
First assume (1a) that $\beta=2$. Then we proceed with the following computation: by Markov's inequality, for any $d>0$,
\begin{equation}\begin{aligned}\label{markovkeys}
\mathbb{P}(\sum_{i=1}^N |\mathfrak{a}(i)|^2\geq N^{2\alpha+1})&\leq e^{-CdN^{2\alpha+1}}\mathbb{E}\left[\exp(d\sum_{i=1}^N |\mathfrak{a}(i)|^2)\right]
\\&=  e^{-CdN^{2\alpha+1}}\mathbb{E}[\exp\left(d|\mathfrak{a}(i)|^2\right)]^{N}\\&\leq C_2e^{-C_3 N^{2\alpha+1}}
\end{aligned}\end{equation}
for some constants $C_2,C_3>0$, where the last step we used the tail decay of the law of $\mathfrak{a}$.

Then assume (1b) that $\beta>2$, then from Jensen's inequality 
\begin{equation}
    \sum_{i=1}^N |\mathfrak{a}(i)|^2 \leq \left(\sum_{i=1}^N |\mathfrak{a}(i)|^\beta\right)^\frac{2}{\beta} N^{1-\frac{2}{\beta}}
\end{equation}
we similarly deduce that, for a different $C'>0$,
\begin{equation}\begin{aligned}
    \mathbb{P}(\sum_{i=1}^N |\mathfrak{a}(i)|^2\geq CN^{2\alpha+1})\leq \mathbb{P}(\sum_{i=1}^N |\mathfrak{a}(i)|^\beta\geq C'N^{\frac{\beta}{2}(2\alpha+\frac{2}{\beta})}  )
\leq C_2 e^{-C_3N^{\alpha\beta+1}},
\end{aligned}
\end{equation} where in the last steep we used Markov's inequality (see \eqref{markovkeys}) and the tail estimate in the law of $\mathfrak{a}$. 

Finally assume (1c) that $\beta<2$. In this case the above reasoning is no longer effective. We end up with a much weaker estimate of the following form: for some $c>0$ to be determined,
\begin{equation}\begin{aligned}
    \mathbb{P}(\sum_{i=1}^N |\mathfrak{a}(i)|^2\geq CN^{2\alpha+1})&\leq \mathbb{P}(\max_{i\in[1,N]}|\mathfrak{a}(i)|\geq CN^{\alpha+c})\\&+\mathbb{P}(\sum_{i=1}^N|\mathfrak{a}(i)|^\beta\geq CN^{2\alpha+1-(2-\beta)(\alpha+c)})\\&\leq C_2e^{-C_3N^{(\alpha+c)\beta}}+C_2e^{-C_3N^{\alpha\beta+1+(\beta-2) c}},
    \end{aligned} 
\end{equation} and a careful choice of (sufficiently small) $c>0$ completes the proof.

Now we consider case (2) of Theorem \ref{1stest1st}, i.e. the random variables $\mathfrak{a}(\cdot)$ have heavy tails yet $\alpha\beta>1$. Fix some sufficiently small $\epsilon>0$, then $\mathfrak{a}$ has finite $\beta-\epsilon$-th moment. Denote by $\beta'=\beta-\epsilon$. In case (2a), (2b) that $\beta'\geq 2$, we get similar to the previous proof that 
\begin{equation}
    \mathbb{P}(\sum_{i=1}^N |\mathfrak{a}(i)|^2\geq CN^{2\alpha+1})\leq \mathbb{P}(\sum_{i=1}^N |\mathfrak{a}(i)|^{\beta'}\geq C'N^{\frac{\beta'}{2}(2\alpha+\frac{2}{\beta'})})\leq (C_2N)^{-(\alpha\beta'+1)}.
\end{equation}
In case (2c) that $\beta'<2$, we deduce as previous that for sufficiently small $c>0$,
\begin{equation}\begin{aligned}
    \mathbb{P}(\sum_{i=1}^N |\mathfrak{a}(i)|^2\geq CN^{2\alpha+1})&\leq \mathbb{P}(\max_{i\in[1,N]}|\mathfrak{a}(i)|\geq CN^{\alpha+c})\\&+\mathbb{P}(\sum_{i=1}^N|\mathfrak{a}(i)|^{\beta'}\geq CN^{2\alpha+1-(2-{\beta'})(\alpha+c)})\\&\leq C_2N^{1-\beta'(\alpha+c)}+(C_2N)^{-(\alpha\beta+1+(\beta-2) c)}.
    \end{aligned} 
\end{equation} 
This completes the proof.

\begin{remark}\label{explicitlefttail}
    We can also obtain the explicit dependence on $\lambda$ concerning the left tail rate function. Notice that $\mu_{H_N^\infty}$ converges to the arcsine law $\mu_{as}$ on $[-2,2]$, we can deduce that if $\lambda_1(N)\in(-2,2),$ then 
$$d_2^2(\mu_{H_N^{\alpha}},\mu_{as})\geq \int_{\lambda_1(N)}^2 \frac{(x-\lambda_1(N))^2dx}{\pi\sqrt{4-x^2}}$$
and when $\lambda_1(N)<-2$ we similarly have 
$$
d_2(\mu_{H_N^{\alpha}},\mu_{as})\geq -2-\lambda_1(N).
$$
Then we can plug this more precise estimate into \eqref{trivialcoupling}, \eqref{markovkeys} and apply Markov's inequality. Overall we get a left tail estimate with explicit dependence on $\mu$.

The same procedure applies to the matrix model $G_N^\alpha$, and one only needs to replace the arcsine law $\mu_{as}$ by the semicircle law $\mu_{sc}$, which is the limiting spectral measure of $G_N^\infty.$
    
\end{remark}

\end{proof}
Finally we prove Theorem \ref{theorem1.1ladd}. The ideas are similar to the proof of \cite{Soshnikov2004}, Theorem 1.2. We begin with a lemma:

\begin{lemma}  For any $0<c_1<d_1<c_2<d_2<\cdots<c_k<d_k<\infty$, let $I_l=(c_l,d_l)$.

Then in case (3) of Theorem \ref{1stest1st}, the number of $N^{-\alpha}\mathfrak{a}(i)$, $i\in[N]$ in $(I_l)_{l=1}^k$ forms a Poisson point process on $[0,\infty)$ with intensity $\frac{C}{\alpha x^{\frac{1}{\alpha}+1}}$ on $[0,\infty)$.

In case (4) of Theorem \ref{1stest1st}, the number of $N^{-\frac{1}{\beta}}\mathfrak{a}(i)$ in $(I_l)_{l=1}^k$ forms a Poisson point process on $[0,\infty)$ with intensity $\frac{C\beta}{ x^{\beta+1}}$ on $[0,\infty)$.
    
\end{lemma}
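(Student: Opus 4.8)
The plan is to recognize this as a standard extreme-value/Poisson convergence statement for i.i.d. random variables with a regularly varying (power-law) tail, and to apply the classical criterion for convergence to a Poisson point process (e.g.\ the Grigelionis–Kallenberg criterion, as in \cite{Soshnikov2004}). The key point is that the scaling $N^{-\alpha}$ in case (3) (resp.\ $N^{-1/\beta}$ in case (4)) is chosen precisely so that the point process $\sum_{i=1}^N \delta_{N^{-\alpha}\mathfrak{a}(i)}$ (resp.\ $\sum_{i=1}^N \delta_{N^{-1/\beta}\mathfrak{a}(i)}$) restricted to $(0,\infty)$ has a nondegenerate limit.

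First I would fix a finite union of disjoint intervals $I=\bigcup_{l=1}^k I_l$ with $I_l=(c_l,d_l)$, $c_1>0$, and compute, for case (3), $\mathbb{E}\big[\#\{i: N^{-\alpha}\mathfrak{a}(i)\in I_l\}\big] = N\,\mathbb{P}(N^{-\alpha}\mathfrak{a}\in I_l) = N\big(\mathbb{P}(\mathfrak{a}>N^\alpha c_l)-\mathbb{P}(\mathfrak{a}>N^\alpha d_l)\big) = N\big(C(N^\alpha c_l)^{-1/\alpha}-C(N^\alpha d_l)^{-1/\alpha}\big) = C(c_l^{-1/\alpha}-d_l^{-1/\alpha})$, using the exact tail assumption \eqref{1.171} for $N$ large so that $N^\alpha c_l>1$. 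Since $\int_{c_l}^{d_l}\frac{C}{\alpha x^{1/\alpha+1}}\,dx = C(c_l^{-1/\alpha}-d_l^{-1/\alpha})$, this matches the claimed intensity exactly (not just asymptotically). For case (4) the identical computation with exponent $\beta$ and scaling $N^{-1/\beta}$ gives mean $C(c_l^{-\beta}-d_l^{-\beta}) = \int_{c_l}^{d_l}\frac{C\beta}{x^{\beta+1}}\,dx$.

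Next I would verify the two hypotheses of the Poisson limit theorem for null arrays of i.i.d.\ summands: (i) for each fixed interval $J=(c,d)$ with $c>0$, the individual probabilities $\mathbb{P}(N^{-\alpha}\mathfrak{a}(i)\in J) = \Theta(N^{-1})\to 0$ are uniformly negligible (the array is \emph{null}); (ii) the expected counts converge, which is exactly the computation above; and (iii) for disjoint $I_l$, since the indicators $1_{N^{-\alpha}\mathfrak{a}(i)\in I_l}$ for a single $i$ are mutually exclusive across $l$, the joint factorial-moment / Laplace-functional computation factorizes in the limit. Concretely, $\mathbb{E}\big[\exp(-\sum_l t_l \#\{i:N^{-\alpha}\mathfrak{a}(i)\in I_l\})\big] = \big(1 - \sum_l (1-e^{-t_l})\mathbb{P}(N^{-\alpha}\mathfrak{a}\in I_l)\big)^N \to \exp\big(-\sum_l (1-e^{-t_l})\,\mu(I_l)\big)$ where $\mu(I_l) = C(c_l^{-1/\alpha}-d_l^{-1/\alpha})$ in case (3), which is the Laplace functional of the asserted Poisson process. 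The negative binomial tail bound \eqref{1.131} (resp.\ \eqref{1.221}) on $\mathbb{P}(\mathfrak{a}<-t)$ plays no role here since we only look at the positive axis.

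I do not anticipate a serious obstacle: this is the routine input underlying the extreme-value statements, and the only mild care needed is to note that the tail hypotheses \eqref{1.171}, \eqref{1.211} are \emph{equalities} for $t>1$, so the convergence of expected counts is genuinely exact for $N$ large rather than merely asymptotic, and that the point process convergence is in the vague topology on $(0,\infty)$ (or equivalently on the one-point compactification away from $0$), which is why we must keep $c_1>0$ bounded away from the origin — the intensity $\frac{C}{\alpha x^{1/\alpha+1}}$ is not integrable near $0$, consistent with infinitely many small points accumulating there. The slight subtlety worth a sentence is that the statement of the lemma should be read as: the restriction of the point process to any such $I$ converges, equivalently the full process on $(0,\infty)$ converges vaguely, and this is what is then fed into Lemma \ref{lemma2.1}(3) and the map $x\mapsto\sqrt{x^2+4}$ (resp.\ the map in Theorem \ref{2ndtheorem}) to obtain Theorem \ref{theorem1.1ladd}.
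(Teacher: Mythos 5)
Your proposal is correct and matches the paper's approach: the paper itself notes that the lemma "is close to Proposition 1 of \cite{Soshnikov2004}" and follows from the computations in \eqref{case3stars}, \eqref{case3starssfdfs} together with the classical Poisson limit theorem of extreme value theory (citing \cite{leadbetter2012extremes}, Theorem 2.3.1), which is exactly the null-array/Laplace-functional criterion you apply. Your explicit computation of the expected counts, the verification that the exact tail equalities \eqref{1.171} and \eqref{1.211} give a precise (not merely asymptotic) match with the claimed intensities, and your remark about vague convergence on $(0,\infty)$ away from the non-integrable singularity at the origin all align with what the paper is implicitly invoking.
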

This lemma is close to Proposition 1 of \cite{Soshnikov2004}. Its verification is a generalization of computations in \eqref{case3stars} and \eqref{case3starssfdfs} and can be found for example in \cite{leadbetter2012extremes}, Theorem 2.3.1.

\begin{proof}[\proofname\ of Theorem \ref{theorem1.1ladd}] In case (3) of Theorem \ref{theorem1.1ladd}, from an elementary computation of random variables, we see that for any $\epsilon>0$, and $\omega'>0$, with probability $1-\omega'$ there are only a (bounded in $N$, depending on $\omega'$) number of $i\in[N]$ such that $N^{-\alpha}\mathfrak{a}(i)>\epsilon$, and with probability $1-\omega'$ the configuration $(\lambda_i^{(N)}=N^{-\alpha}\mathfrak{a}(i))_{i\in[N]}$ satisfies the assumptions of Lemma \ref{lemma2.1} (3), and that any $i$ such that $N^{-\alpha}\mathfrak{a}(i)>\epsilon$ must satisfy $i\in [N^c,N-N^c]$ for a small $c$. In this case, the computations in Lemma \ref{lemma2.1}(3) provide us with a one-to-one correspondence between the large $N$-limit of some $N^{-\alpha}\mathfrak{a}(i)>\epsilon$ and the large $N$-limit of a large eigenvalue of $H_N^\alpha$ via the map $x\mapsto\sqrt{x^2+4}$. This completes the proof.

    In case (4) of Theorem \ref{theorem1.1ladd}, the idea of proof is similar to \cite{Soshnikov2004}, Theorem 1.2. We claim that the large $N$-limit of the configuration of $N^{-\frac{1}{\beta}}\mathfrak{a}(i)$ precisely correspond to the large $N$-limit of the largest eigenvalues of $N^{\alpha-\frac{1}{\beta}}H_N^\alpha$. This correspondence has been verified for the top eigenvalue in the proof of Theorem \ref{1sttheorem}, and repeating that proof for the second, third, etc. largest point in the limit (considering approximate eigenfunctions supported on only one index, and using perturbation to show existence of an eigenvalue) shows that each these limit points correspond to the $N\to\infty$ limit of an eigenvalue of $H_N^\alpha$, so we have identified $k$ distinct values as the $N\to\infty$ limit of eigenvalues of $N^{\alpha-\frac{1}{\beta}}H_N^\alpha$. To show that they are indeed the \textit{largest} $k$ limiting eigenvalues of $N^{\alpha-\frac{1}{\beta}}H_N^\alpha$, consider the $k$ indices $i$ such that $N^{-\frac{1}{\beta}}\mathfrak{a}(i)$ achieves the $k$ largest values for $i\in[1,N]$ and consider $H_N^{\alpha,k}$, which is $H_N^\alpha$ with these $k$ rows and columns removed. Then the largest eigenvalue of $N^{\alpha-\frac{1}{\beta}}H_N^{\alpha,k}$ converges to the $k+1$-st largest point in the limiting configuration of $N^{-\frac{1}{\beta}}\mathfrak{a}(i)$. Meanwhile, by interlacing, this limit is at least as large as the $k+1$-th largest eigenvalue of $N^{\alpha-\frac{1}{\beta
    }}H_N^\alpha$. Thus we have set up the correspondence between the top $k$ eigenvalues of $N^{\alpha-\frac{1}{\beta
    }}H_N^\alpha$ and the largest $k$ points in the Poisson point process, for each $k$.
\end{proof}

\section{Proof of Theorem \ref{2ndtheorem}}\label{section3}

The proof of Theorem \ref{2ndtheorem} is essentially the same as that of Theorem \ref{1sttheorem}, except that we need to prove the following result on unitarily perturbed matrices.

\begin{lemma}\label{lemma3.1woc} Recall the deterministic matrix $G_N^\infty$ defined in \eqref{1.2001}.
    Let $$\Lambda_N=\operatorname{diag}(\lambda_1^{(N)},\cdots,\lambda_N^{(N)}),$$ and let $U_N$ be a Haar distributed unitary matrix. Let $M_N:=\max_{i\in [N]}\lambda_i^{(N)}>0$. Assume that for any $\epsilon>0$ there are only finitely many $i\in[1,N]$ (the number does not grow with $N$) such that $|\lambda_i^{(N)}|>\epsilon$. Assume that 
    $$
\lim_{N\to\infty}M_N=M>0.
    $$
    Then as $N\to\infty$, the largest eigenvalue of $$G_N^\infty+U_N\Lambda_N U_N^T$$ converges almost surely to 
    \begin{equation}\label{3.11.33.11.3}
        \begin{cases}
2, \quad M<1,\\
M+\frac{1}{M},\quad M\geq 1.
        \end{cases}
    \end{equation}
\end{lemma}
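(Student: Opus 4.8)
The plan is to reduce the problem to a low-rank perturbation computation, exactly as in the tridiagonal case, but now exploiting the rotational invariance to replace the explicit resolvent of $H_N^\infty$ by the Stieltjes transform of the semicircle law. First I would truncate: let $\Lambda_N^*$ be the diagonal matrix obtained from $\Lambda_N$ by keeping only those entries $\lambda_i^{(N)}$ with $|\lambda_i^{(N)}|>\epsilon$. By assumption there are at most a bounded number $k=k(\epsilon)$ of these, so $\Lambda_N^*$ has rank $\le k$, and $\|\Lambda_N-\Lambda_N^*\|\le\epsilon$; by Weyl's inequality the top eigenvalue of $G_N^\infty+U_N\Lambda_N U_N^T$ differs from that of $G_N^\infty+U_N\Lambda_N^* U_N^T$ by at most $\epsilon$, so it suffices to treat the finite-rank case and then send $\epsilon\to0$.

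Next, for $\lambda>2$ (hence outside the limiting spectrum $[-2,2]$ of $G_N^\infty$, and in fact outside $\operatorname{spec}(G_N^\infty)$ for $N$ large since the edge converges to $2$), write $\lambda$ an eigenvalue of $G_N^\infty+U_N\Lambda_N^* U_N^T$ iff
\begin{equation}
\det\!\bigl(\operatorname{I}_N-(\lambda\operatorname{I}_N-G_N^\infty)^{-1}U_N\Lambda_N^* U_N^T\bigr)=0 .
\end{equation}
Writing $\Lambda_N^*=V D V^T$ with $D$ a $k\times k$ diagonal block of the nonzero eigenvalues and $V$ the corresponding $N\times k$ matrix of standard basis vectors, the Weinstein–Aronszajn identity turns this into
\begin{equation}
\det\!\bigl(\operatorname{I}_k-D\,(U_N V)^T(\lambda\operatorname{I}_N-G_N^\infty)^{-1}(U_N V)\bigr)=0 .
\end{equation}
The $k\times k$ matrix $(U_N V)^T(\lambda\operatorname{I}_N-G_N^\infty)^{-1}(U_N V)$ has columns that are $k$ independent uniformly random unit directions (rotated by $U_N$), so by concentration of quadratic forms $v^T(\lambda\operatorname{I}_N-G_N^\infty)^{-1}w\to \delta_{vw}\,m_{sc}(\lambda)$ almost surely as $N\to\infty$, where $m_{sc}(\lambda)=\int(\lambda-x)^{-1}d\mu_{sc}(x)$ is the Stieltjes transform of the semicircle law; here one uses that the empirical spectral distribution of $G_N^\infty$ converges to $\mu_{sc}$ (it is the $\beta=\infty$ / zero-temperature Dumitriu–Edelman model) together with the edge bound $\|G_N^\infty\|\to2$ so that no mass escapes near $\lambda$. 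Hence the determinant condition collapses in the limit to $1-\lambda_i^{(N)}m_{sc}(\lambda)=0$ for some $i$, i.e. $m_{sc}(\lambda)=1/M$ after passing to the largest entry $M_N\to M$.

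Finally I would solve $m_{sc}(\lambda)=1/M$: using $m_{sc}(\lambda)=\tfrac{\lambda-\sqrt{\lambda^2-4}}{2}$, and the substitution $\lambda=f+1/f$ with $f\ge1$ (so $m_{sc}(\lambda)=1/f$ in the notation $f(\lambda)$ of the paper), the equation becomes $f=M$. This has a solution with $f\ge1$ precisely when $M\ge1$, giving the outlier $\lambda=M+1/M$; when $M<1$ there is no solution with $\lambda>2$, and a matching one-sided bound (the analogue of Lemma 2.1(2): removing the $k$ perturbed directions leaves $\|G_N^\infty\|\to2$, so by interlacing the top eigenvalue is $\le2+o(1)$) shows the top eigenvalue converges to $2$. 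To promote this from ``the determinant condition is satisfied in the limit'' to a genuine eigenvalue, I would argue as in Lemma 2.1: the function $\lambda\mapsto \det(\operatorname{I}_k-D\,Q_N(\lambda))$ with $Q_N(\lambda)=(U_NV)^T(\lambda-G_N^\infty)^{-1}(U_NV)$ is continuous and monotone-ish in $\lambda$ on $(2,\infty)$, positive for $\lambda$ large and changing sign near $M+1/M$ when $M>1$, which pins the largest root at $M+1/M+o(1)$, while the absence of sign change (for $N$ large) on $(2+\delta,\infty)$ when $M<1$ together with the interlacing upper bound gives the value $2$ in that regime.

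The main obstacle I expect is the almost-sure concentration $v^T(\lambda\operatorname{I}_N-G_N^\infty)^{-1}w\to\delta_{vw}m_{sc}(\lambda)$, \emph{uniformly} for $\lambda$ in compact subsets of $(2,\infty)$ and simultaneously for all $k$ random directions: this needs (i) a quantitative edge bound $\lambda_1(G_N^\infty)\le2+o(1)$ to control the resolvent norm near $\lambda=2$, and (ii) Borel–Cantelli-type summability of the Gaussian/Lipschitz concentration estimates for quadratic forms of a Haar vector against a bounded-norm deterministic matrix. Both are standard, but making the convergence uniform in $\lambda$ (rather than pointwise) is what lets one conclude convergence of the \emph{largest root}, and is the step I would write most carefully.
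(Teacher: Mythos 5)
Your proof is correct in substance but takes a genuinely different route from the paper. The paper truncates to finite rank (as you do) and then invokes the Benaych-Georges and Nadakuditi theorem on low-rank additive perturbations of orthogonally invariant matrix ensembles as a black box, routing through the Dumitriu--Edelman tridiagonal model $G_N^\beta$ for finite $\beta$ and arguing that the conclusion transfers to the zero-temperature matrix $G_N^\infty$. You instead re-derive the relevant special case of that theorem from scratch: you pass to the $k\times k$ determinant condition via the Weinstein--Aronszajn identity, use concentration of quadratic forms $v^T(\lambda-G_N^\infty)^{-1}w$ in Haar directions to replace the resolvent by the Stieltjes transform $m_{sc}$, and solve $m_{sc}(\lambda)=1/M$ to locate the outlier. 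What the paper's approach buys is brevity, at the cost of an intermediate reduction through $G_N^\beta$ whose justification (that a conclusion for $G_N^\beta$ at fixed $\beta$ transfers to $G_N^\infty$) is somewhat terse. What your approach buys is self-containedness and transparency: you work directly with the deterministic $G_N^\infty$, using only that its empirical spectral distribution tends to the semicircle law and that its edge is $2+o(1)$, plus standard Haar concentration with Borel--Cantelli for almost-sure convergence. You are also right to flag the locally uniform (in $\lambda\in(2+\delta,\infty)$) concentration as the place requiring the most care; the paper avoids this by citing BGN, where that uniformity is handled internally. Both proofs reach the same threshold formula, and your identification $m_{sc}(\lambda)=1/f(\lambda)$ correctly matches the paper's notation.
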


\begin{proof}
    Consider the matrix $G_N^\beta$ defined as

\begin{equation}\label{fucky}
    G_N^\beta=\frac{1}{\beta}
    \begin{pmatrix}
    N(0,2)&\chi_{(N-1)\beta}&0&\cdots&\cdots&0\\
    \chi_{(N-1)\beta}&N(0,2)& \chi_{(N-2)\beta}&0&\cdots&0\\
   \vdots &\ddots&\ddots&\ddots&\ddots&\ddots\\
    0&\cdots&\chi_{3\beta}&N(0,2)&\chi_{2\beta}&0\\
   0&\cdots &0&\chi_{2\beta}&N(0,2)&\chi_\beta\\
    0&\cdots&\cdots&0&\chi_\beta&N(0,2)
    \end{pmatrix},
\end{equation}
where the $N(\cdot)$ are centered normal distributions and the $\chi_\cdot$ are chi distributions with the specified parameter. Thanks to subGaussian tails of these random variables, once we have proved that the top eigenvalue of $G_N^\beta+U_N\Lambda_N U_N^T$ converges almost surely to some deterministic limit as $N\to\infty$, the same conclusion holds for the top eigenvalue of $G_N^\infty+U_N\Lambda_N U_N^T$. 

The matrix $G_N^\beta$ is the matrix representation for Gaussian beta ensembles obtained in Dumitriu-Edelman \cite{dumitriu2002matrix}. In particular, the empirical measure of eigenvalues of $G_N^\beta$ converges to the semicircle law on $[-2,2]$ as $N\to\infty$ and that the largest and smallest eigenvalues of $G_N^\beta$ converge to $-2$ and $2$ respectively. 

Thanks to our assumption on the magnitude of $\lambda_i$, we may assume without loss of generality that $\Lambda_N$ has finite rank (that is, we set $\lambda_i$ to be 0 if $|\lambda_i|<c$, and the resulting top eigenvalue of the modified matrix differs by at most $c$ to the top eigenvalue of the original matrix via Cauchy interlacing formula, and we finally set $c\to 0$).

Since $U_N\Lambda_N U_N^T$ is orthogonal invariant, we may use the main result of \cite{benaych2011eigenvalues} to conclude that the top eigenvalue of $G_N^\beta+U_N \Lambda_N U_N^T$ converges to \eqref{3.11.33.11.3} almost surely. Although the paper \cite{benaych2011eigenvalues} requires the spike matrix has eigenvalues independent of $N$, which is not assumed here, the result in \cite{benaych2011eigenvalues} can still be applied here as we can take $N$ sufficiently large. We only need the following modifications: as the largest eigenvalue of the spiked matrix only depends on $M_N^1$ but not the rest, we perturb $M_N^1$ to be $M+\epsilon$ (such that $M+\epsilon>M_N^1$, and later set $\epsilon\to 0$,) and apply \cite{benaych2011eigenvalues}. The effect of such replacement to the top eigenvalue converges to $0$ thanks to Cauchy interlacing theorem. Then by the previous argument, the top eigenvalue of $G_N^\infty+U_N\Lambda_N U_N^T$ converges to the same limit.
    
\end{proof}

Now we complete the proof of Theorem \ref{2ndtheorem}.

\begin{proof}
    The proof of Theorem \ref{2ndtheorem} is exactly the same as the proof of Theorem \ref{1sttheorem} and \ref{theorem1.1ladd}, and one simply needs to replace Lemma \ref{lemma2.1} by Lemma \ref{lemma3.1woc}. The proof of upper tails is exactly the same via Hoffman-Wielandt inequality, so we concentrate on proving the lower tails.
    
    For the lower tails, we follow exactly the same computation as in Theorem \ref{1sttheorem}. For the upper tails in cases (3), we only need to check that the assumptions of Lemma \ref{lemma3.1woc} are satisfied with probability $1-o(1)$: indeed,
we may work under assumption \eqref{1.171} where $\mathfrak{a}$ has the heaviest tails and deduce that
\begin{equation}
    \mathbb{P}(\text{ there exists $k$ different } i\in[N]: N^{-\frac{1}{\alpha}}|\mathfrak{a}(i)|>\epsilon)\leq C_\epsilon \tbinom{k}{N} (N)^{-k}\leq \frac{C_\epsilon}{k!}\to 0,\quad k\to\infty,
\end{equation}
so that for any $\epsilon>0$, we may assume thee are a (bounded in $N$) number of subscripts $i$ such that $N^{-\frac{1}{\alpha}}|\mathfrak{a}(i)|>\epsilon$. Then by Lemma \ref{lemma3.1woc}, the largest eigenvalue is governed by the largest value of the potentials $N^{-\frac{1}{\alpha}}\mathfrak{a}(i),i\in [N].$ The distribution of this maximum is already derived in \eqref{1stest1st}, \eqref{case2stars} and \eqref{case3stars}. This completes the proof.

For the upper tail in case (2), we check that for any $\epsilon>0$,
\begin{equation}
    \mathbb{P}(\text{ there exists two different } i\in[N]: N^{-\frac{1}{\alpha}}|\mathfrak{a}(i)|>\epsilon)=O(N^2N^{-2\alpha\beta})=o(N^{-\alpha\beta+1}) 
\end{equation}  and hence negligible in the limit \eqref{agagagagagagrqag}.

For the upper tail in case (1), we check that for any $\epsilon>0$,
\begin{equation}
    \mathbb{P}(\text{ there exists two different } i\in[N]: N^{-\frac{1}{\alpha}}|\mathfrak{a}(i)|>\epsilon)=O(N^2e^{-2CN^{\alpha\beta}}) 
\end{equation}  and hence negligible in the limit \eqref{limitsss}.
    
\end{proof}

\section*{Acknowledgement}
The author has no financial or non-financial conflicting of interests to declare that are relevant to the contents of this article.

\printbibliography
\end{document}